\newtheorem{theorem}{Theorem}
\newtheorem{lemma}{Lemma}[section]
\newtheorem{remark}{Remark}
\newtheorem{proposition}{Proposition}
\newtheorem{corollary}{Corollary}
\begin{document}
	
	\author{ Sumit Kumar, Kummari Mallesham, Prahlad Sharma and Saurabh K Singh}
	\title{Moments of  derivatives of  Modular  $L$-functions }

	\address{  Sumit Kumar, Prahlad Sharma \newline {\em Alfr\'ed R\'enyi Institute of Mathematics, Budapest, Hungary; \newline  Email:  sumit@renyi.hu; prahlad@renyi.hu
	} }
	\address{ Kummari Mallesham \newline {\em Indian Institute of Technology, Bombay,  India; \newline  Email: iitm.mallesham@gmail.com
	} }
	\address{ Saurabh Kumar Singh \newline {\em Indian Institute of Technology, Kanpur, India; \newline  Email: skumar.bhu12@gmail.com
	} }
	\maketitle
	
	\begin{abstract}
		Let $f$ be a Hecke eigenform  for  the group $\Gamma_{0}(q)$ and $\chi_{d}$ be a primitive quadratic character of conductor $|d|$.  In this article, we prove an asymptotic for the second moment of  the derivative of 
		$L(s, f \otimes \chi_{8d})$ at the central point $1/2$, which was previously known under   GRH by Petrow  \cite{petrow}. 
		%
	\end{abstract}


	\section{Introduction}  
	Let $f$ be   a Hecke eigenform for  the group $\Gamma_{0}(q)$  of even weight $k$ with trivial central character.  We write its Fourier expansion as
	%
	%
	$$f(z) = \sum_{n=1}^{\infty} \lambda_{f}(n) \, n^{(k-1)/2} \, e(nz), \, \, z\in \mathbb{H}.$$
	with  $\lambda_{f}(1)=1$ and $f$ has been normalised so that  the Deligne's bound gives  $ \vert \lambda_{f}(n) \vert \leq \tau(n)$ for all $n \geq 1$,  where $\tau(n)$ is the divisor function.
	Let $d$ be a fundamental discriminant coprime to $q$, and let $\chi_{d}(\cdot)= \left(\frac{d}{\cdot}\right)$ denote the primitive quadratic character of conductor $|d|$. Then $f \otimes \chi_{d}$ is a newform on $\Gamma_{0}(q|d|^{2})$ (though $f$ need not be a newform) and  the twisted $L$-function is given by
	$$L(s, f \otimes \chi_{d}) = \sum_{n=1}^{\infty} \frac{\lambda_{f}(n) \, \chi_{d}(n)}{n^{s}}, \, \, \Re(s) >1.$$
	It 	has an  analytic continuation to the whole of $\mathbb{C}$ and satisfies the following  functional equation
	$$\Lambda(s, f \otimes \chi_{d}) = i^{k} \eta \chi_{d}(-q) \, \Lambda(1-s, f\otimes \chi_{d}),$$
	where
	$$\Lambda(s, f\otimes \chi_{d}) = \left(\frac{|d|\sqrt{q}}{2 \pi}\right)^{s} \Gamma\left(s+\frac{k-1}{2}\right) \, L(s, f\otimes \chi_{d}),$$
	and $\eta$ is   the eigenvalue of the Fricke involution which is always $\pm 1$. We denote the root number by $\omega(f\otimes \chi_{d}) := i^{k} \eta \chi_{d}(-q)$. Note that if $d$ is a fundamental discriminant, then $\chi_{d}(-1)= 1$ if $d$ is positive and $\chi_{d}(-1)=-1$ if $d$ is negative.  The derivative of the $L$-function $L(s, f \times \chi_d)$ is given by 
	$$L^{\prime}(s, f \otimes \chi_{d} ) = -\sum_{n=1}^{\infty} \frac{\lambda_{f}(n) \chi_{d}(n) \log n}{n^{s}}, \, \, \Re(s) >1,$$
	which  also has a 
	functional equation 
	$$\Lambda^{\prime}(s, f \otimes\chi_{d}) = - \omega(f\otimes \chi_{d}) \, \Lambda^{\prime} (1-s, f\otimes \chi_{d}).$$
	In this paper, we are interested in the asymptotic behaviour  of  $L^\prime(s, f \otimes \chi_{d})$ at the central point $1/2$ over  the family of quadratic twists.  The mean value of this  family has been explored  previously by many authors,   particularly by Bump-Friedberg-Hoffstein \cite{BFH}, Murty-Murty \cite{murty}, Iwaniec \cite{I}, Munshi  \cite{M1}, \cite{M2} and Petrow \cite{petrow}.  Petrow, assuming GRH, proved
	\begin{equation} \label{L^2 asy}
		\sideset{}{^\star}{\sum}_{\substack{ (d,2q)=1 \\ \omega(f \otimes \chi_{8d})=-1}} \vert L^{\prime}\left(1/2, f\times \chi_{8d}\right) \vert^{2} J\left(\frac{8d}{X}\right) \sim  C_f^\prime  X\log^3X,
	\end{equation}
	where $C_f$ is a fixed explicit contant depending on $f$ only and  $J: \mathbb{R} \to \mathbb{R}_{\geq 0}$ is a compactly supported smooth function having support  in $[1/2,2]$.  This result was motivated by an important work by Soundarajan-Young \cite{SY}, who proved an analog of \eqref{L^2 asy} without the derivative, i.e., 
	\begin{equation} 
		\sideset{}{^\star}{\sum}_{\substack{ (d,2q)=1 }} \vert L\left(1/2, f\times \chi_{8d}\right) \vert^{2} J\left(\frac{8d}{X}\right) \sim  C_f  X\log X,
	\end{equation}
	assuming GRH. The above result was proved unconditionally  recently in a breakthrough work by Li \cite{Li}. The main aim of this article is prove \eqref{L^2 asy} unconditionally.  
	\begin{theorem} \label{mainth}
		Let $f$ and $J$ be as above. Then we have
		$$ \sideset{}{^\star}{\sum}_{\substack{ (d,2q)=1 \\ \omega(f \otimes \chi_{8d})=-1}} \vert L^{\prime}\left(1/2, f\times \chi_{8d}\right) \vert^{2} J\left(\frac{8d}{X}\right) \, = \,  C_{f} \, \widetilde{J}(1) \, X \log ^{3}X +O \left(X\, (\log X)^{\frac{5}{2}+\epsilon}\right),$$
		where $C_{f}$ is some explicit constant depending  only on $f$ and $\widetilde{J}$ is the Mellin transform of $J$. 
	\end{theorem}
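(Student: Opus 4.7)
The plan is to adapt Li's unconditional treatment \cite{Li} of the second moment of $L(1/2, f \otimes \chi_{8d})$ to the derivative setting. The first step is an approximate functional equation. Since $\omega(f \otimes \chi_{8d}) = -1$, the $L$-function vanishes at the central point, and differentiating the functional equation for $\Lambda(s, f \otimes \chi_{8d})$ at $s=1/2$ gives a symmetric expansion
$$L'(1/2, f \otimes \chi_{8d}) \;=\; 2 \sum_{n \geq 1} \frac{\lambda_{f}(n) \chi_{8d}(n)}{\sqrt{n}} \, V_{d}(n),$$
where $V_d$ is a smooth weight of effective length $n \ll |d|\sqrt{q}$ that carries a factor of $\log(\cdot)$ inherited from the derivative. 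Squaring and summing against $J(8d/X)$ reduces the problem to evaluating
$$\sum_{m,n} \frac{\lambda_{f}(m) \lambda_{f}(n)}{\sqrt{mn}} \, W_X(m,n) \, \mathcal{C}(mn), \qquad \mathcal{C}(\ell) := \sideset{}{^\star}{\sum}_{d} \chi_{8d}(\ell) \, J\!\left(\frac{8d}{X}\right),$$
where the $d$-sum is subject to the restrictions of Theorem \ref{mainth} and $W_X$ is a smooth product weight carrying the $\log m$ and $\log n$ factors.

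Writing $mn = \ell^{2} r$ with $r$ squarefree, the diagonal term ($r=1$) is extracted by Mellin inversion. Each derivative of the $L$-function contributes an additional $\log X$ in the residue analysis at the pole of the Rankin--Selberg-type kernel; combined with the standard pole that already produces the $\log X$ main term in the Soundararajan--Young/Li setting, the residue is of order $(\log X)^{3}$, yielding the asymptotic $C_{f} \, \widetilde{J}(1) \, X (\log X)^{3}$, with $C_f$ expressible in terms of $L(1, \mathrm{sym}^{2} f)$. For $r > 1$, Poisson summation on the $d$-variable modulo $8r$ dualises the inner sum into a sum over frequencies $k \geq 0$ with quadratic Gauss sums appearing as coefficients. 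The $k = 0$ contribution furnishes secondary main terms that, after another contour shift, are absorbed into the claimed error, and the $k \neq 0$ frequencies constitute the critical off-diagonal.

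At this stage Petrow's argument invokes GRH to bound the resulting short character sums $\sum_{r} \lambda_{f}(r) \chi_{8k}(r) r^{-1/2}$. To avoid GRH, the plan is to employ Li's recursive scheme: after Cauchy--Schwarz in $d$, Heath--Brown's quadratic large sieve reduces the estimation to a second moment over characters, which is then fed back iteratively to eliminate every GRH-dependent step. The \emph{principal obstacle} is quantitative. The target error $O(X (\log X)^{5/2 + \epsilon})$ is only $(\log X)^{1/2 - \epsilon}$ smaller than the $(\log X)^{3}$ main term, while the extra $(\log m)(\log n)$ weight in $W_X$ inflates every residue in the off-diagonal analysis by an additional $\log X$ factor relative to the situation in \cite{Li}. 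Consequently, each Cauchy--Schwarz step, contour shift, and truncation must be performed with sharp logarithmic bookkeeping, and every polar contribution from Dirichlet convolutions near $s = 1/2$ in the dual sum must be identified as a secondary main term rather than estimated trivially, so that the final balance reproduces the sharp $(\log X)^{3}$ asymptotic with the required power-saving in $\log X$.
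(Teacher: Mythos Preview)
Your sketch correctly identifies the overall structure and the principal difficulty, but it misses the device that the paper uses to sidestep precisely the obstacle you flag. The paper does \emph{not} attempt to carry the extra logarithmic weights through Li's off-diagonal machinery directly. Instead it introduces an artificial cutoff $\mathcal{M} = X/(\log X)^{1000}$ and writes
\[
L'(1/2, f \otimes \chi_{8d}) \;=\; \mathcal{A}(8d) + \mathcal{B}(8d),
\]
where $\mathcal{A}$ is the approximate-functional-equation sum truncated at $\mathcal{M}$ and $\mathcal{B}$ is the remainder. The second moment of $\mathcal{A}$ is analysed exactly as in Petrow/Li: the diagonal gives $C_f \,\widetilde{J}(1)\, X (\log X)^3 + O(X(\log X)^2)$, and because Li's off-diagonal bound is ultimately of size $\mathcal{M}\cdot(\text{polylog})/X$, the choice of $\mathcal{M}$ forces it down to $O(X/(\log X)^{10})$ with no delicate bookkeeping. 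The second moment of $\mathcal{B}$ is bounded directly by Li's large-sieve input: the crucial point is that $\mathcal{B}$ carries the kernel $\bigl((8d)^{w}-\mathcal{M}^{w}\bigr)/w^{2}$, whose numerator vanishes at $w=0$, so the double pole coming from the derivative collapses to a simple pole with residue $\log(8d/\mathcal{M}) \asymp \log\log X$. This yields $\sum_d |\mathcal{B}|^{2} \ll X(\log X)^{2}(\log\log X)^{4}$ rather than $X(\log X)^{4}$. The stated error $X(\log X)^{5/2+\epsilon}$ then arises entirely from the cross term via Cauchy--Schwarz:
\[
\Bigl(\sum_d |\mathcal{A}|^{2}\Bigr)^{1/2}\Bigl(\sum_d |\mathcal{B}|^{2}\Bigr)^{1/2} \ll X(\log X)^{5/2}(\log\log X)^{2}.
\]

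Your proposed direct route---pushing the $(\log m)(\log n)$ weights through the full Poisson/Gauss-sum/recursive analysis and isolating every secondary main term---is not obviously wrong, but it is exactly the laborious path that the $\mathcal{A}/\mathcal{B}$ split is designed to avoid. Without the split you must show that Li's off-diagonal saving of $(\log X)^{1/2}$ survives after two extra $\log X$ weights are inserted; this is plausible but is not established in your outline, and the ``sharp logarithmic bookkeeping'' you invoke is precisely the hard work left undone. The paper's decomposition replaces that bookkeeping with a single cancellation at $w=0$.
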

	
	We  combine the methods of  Li \cite{Li} and  Petrow \cite{petrow} to prove  the above  theorem. A similar result holds when $\omega(f \otimes \chi_{8d})=1$. 
	
	\begin{corollary} \label{totaldis}
		We have
		$$ \sideset{}{^\star}{\sum}_{\substack{ (d,2q)=1 }} \vert L^{\prime}\left(1/2, f\times \chi_{8d}\right) \vert^{2} J\left(\frac{8d}{X}\right) \, = \,  \mathcal{C}_{f} \, \widetilde{J}(1) \, X \log ^{3}X +O\left( X\, (\log X)^{\frac{5}{2}+\epsilon}\right),$$
		where $\mathcal{C}_{f} $  is some constant depending  on $f$ only. 
	\end{corollary}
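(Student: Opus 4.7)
The plan is to deduce the corollary by supplying, alongside Theorem \ref{mainth}, an analogous asymptotic for the complementary subfamily with $\omega(f \otimes \chi_{8d}) = +1$. Since $\omega(f \otimes \chi_{8d}) = i^{k}\eta\,\chi_{8d}(-q)$ and the support of $J$ forces $d>0$, quadratic reciprocity shows that $\chi_{8d}(-q)$ depends on $d$ only through its residue class modulo a fixed divisor of $8q$. Thus each sign class is cut out by finitely many congruence conditions on $d$, and the unrestricted sum splits as the $\omega=-1$ piece plus the $\omega=+1$ piece; the former is exactly the content of Theorem \ref{mainth}.

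For the $\omega=+1$ piece, I would rerun the proof of Theorem \ref{mainth} --- a blend of the methods of Li \cite{Li} and Petrow \cite{petrow} --- with only cosmetic modifications. The root number enters only as a phase in the dual term of the approximate functional equation for $|L'(1/2,f\otimes\chi_{8d})|^{2}$; flipping its sign does not alter the structure of the subsequent diagonal and off-diagonal analysis. The extra congruence restriction on $d$ can be detected by decomposing into arithmetic progressions or by inserting a finite sum of Dirichlet characters of bounded modulus, a modification that propagates harmlessly through the estimates. The outcome is an asymptotic of the form $C_{f}^{+}\,\widetilde{J}(1)\,X\log^{3}X+O\!\left(X(\log X)^{5/2+\epsilon}\right)$ for a new explicit constant $C_{f}^{+}$ depending only on $f$.

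Adding the two asymptotics yields the corollary with $\mathcal{C}_{f}=C_{f}+C_{f}^{+}$. The main obstacle throughout the entire program is the off-diagonal analysis already carried out in the proof of Theorem \ref{mainth}; passing to the unrestricted sum introduces no genuinely new difficulty, since the sign restriction amounts only to an additional congruence condition on $d$ that can be absorbed into the character sums appearing throughout the argument. In particular, the quality of the error term is preserved because all the bounds used in the proof of Theorem \ref{mainth} are uniform in such bounded-modulus twists.
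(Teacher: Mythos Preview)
Your approach differs substantially from the paper's, and your claim that the $\omega=+1$ case requires only ``cosmetic modifications'' overlooks a genuine structural point. The approximate functional equation used in the proof of Theorem~\ref{mainth} (Petrow's Lemma~3.1) reads
\[
L'(1/2,f\otimes\chi_{8d})\,\mathbb{I}(\omega=-1)=(1-i^{k}\eta\,\chi_{8d}(-q))\sum_{n\ge1}\frac{\lambda_f(n)\chi_{8d}(n)}{\sqrt{n}}W\!\left(\frac{n}{|8d|}\right),
\]
and the factor $1-i^{k}\eta\,\chi_{8d}(-q)$ vanishes identically when $\omega=+1$. So the formula gives no information in that case; the root number is not merely a phase on a dual term but is what makes the main and dual sums coincide. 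To rerun the argument for $\omega=+1$ you would first need a different AFE for $L'(1/2)$ with asymmetric weights, which is more than cosmetic.

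The paper instead exploits a simple identity: when $\omega=+1$ the functional equation $\Lambda'(s)=-\Lambda'(1-s)$ forces $\Lambda'(1/2)=0$, whence
\[
L'(1/2,f\otimes\chi_{8d})=-L(1/2,f\otimes\chi_{8d})\Bigl\{\log\!\bigl(\tfrac{8|d|\sqrt{q}}{2\pi}\bigr)+\tfrac{\Gamma'(k/2)}{\Gamma(k/2)}\Bigr\}.
\]
Squaring and summing, the $\omega=+1$ contribution becomes $(\log X)^{2}$ times the second moment of $L(1/2,f\otimes\chi_{8d})$, which is exactly Li's unconditional result \cite[Theorem~1.1]{Li} with error $O(X(\log X)^{1/2+\epsilon})$; this yields $C_f'\,\widetilde{J}(1)\,X\log^{3}X+O(X(\log X)^{5/2+\epsilon})$ directly. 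Adding this to Theorem~\ref{mainth} gives the corollary. Your route could in principle be pushed through with a suitable AFE, but the paper's reduction to Li is both shorter and avoids redoing the entire off-diagonal analysis.
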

	
	\begin{proof}
		When  $\omega(f\otimes\chi_{8d})=1$, it follows from the functional equation 
		$$\Lambda^{\prime}(s, f \otimes \chi_{8d}) = -\, \Lambda^{\prime}(1-s, f\otimes \chi_{8d}),$$
		that $\Lambda^{\prime}(1/2,f\otimes \chi_{8d}) =0$. Thus
		$$L^{\prime}({1}/{2}, f\otimes \chi_{8d}) = -L({1}/{2},f\otimes\chi_{8d}) \Big\{ \log\left(\frac{8|d| \sqrt{q}}{2\pi}\right) \,  +\frac{\Gamma^{\prime}\left({k}/{2}\right) }{\Gamma\left({k}/{2}\right) }\Big \}.$$
		On applying the asymptoic expression 
		$$\sum_{(d,2q)=1} \left( L\left({1}/{2}, f\otimes \chi_{8d}\right) \right)^{2} \, J\left(\frac{8d}{X}\right) = C^{\prime}_{f} \, \widetilde{J}(1) \, X \log X +O\left(X (\log X)^{1/2+\epsilon}\right)$$
		from  Li \cite[Theorem 1.1]{Li}, it follows that
		\begin{align*}
			&\sideset{}{^\star}{\sum}_{\substack{ (d,2q)=1 \\ \omega(f \otimes \chi_{8d})=1}} \vert L^{\prime}\left(1/2, f\times \chi_{8d}\right) \vert^{2} J\left(\frac{8d}{X}\right) \, \\
			&	= (\log X)^{2}\sum_{ (d,2q)=1}  \left( L\left({1}/{2}, f\otimes \chi_{8d}\right) \right)^{2} \, J\left(\frac{8d}{X}\right)+O\left(X \log X\right) \\
			&= C_{f}^{\prime}\, \widetilde{J}(1) \, X (\log X)^{3} +O\left(X (\log X)^{\frac{5}{2}+\epsilon}\right).
		\end{align*}
		Thus  the corollary follows by  Theorem \ref{mainth}  and the above expression. 
	\end{proof}
	A similar method works for all discriminants $d$ as well.  As an application, we improve upon  the lower bound for 
	\begin{align*}
		N(X) = \#  \{ 0< d\leq X:   d  \  \mathrm{is \ a \ fundamental \ discriminant   \ with } \   (d,q)=1 \\  \text{and} \  L^{\prime}(1/2,E \otimes\chi_{d} \neq 0) \}.
	\end{align*}
	Here $E$ be a modular elliptic curve over $\mathbb{Q}$ with  the $L$-function $L(s, E )=L(s, f ) $ for some eigenform $f$ of weight $2$ and level $q$, where $q$ is the conductor  of $E$. Iwaniec  \cite{I} proved that $N(X) \gg_\epsilon X^{2/3-\epsilon}$ for any $\epsilon>0$.   This bound was later improved to  
	\begin{align} \label{NX}
		N(X) \gg_\epsilon X^{1-\epsilon},
	\end{align}
	by Perelli and Pomykala \cite{parelly}. We improve upon  \eqref{NX} in the following corollary. 
	\begin{corollary}\label{th1}
		We have 
		$$N(X) \gg X/\log X. $$
	\end{corollary}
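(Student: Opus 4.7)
The plan is to apply a standard Cauchy--Schwarz argument: pair the second moment from Theorem \ref{mainth} against an asymptotic for the first moment of $L'(1/2, E \otimes \chi_{8d})$ over the same subfamily, and exploit non-negativity of the central derivative (via Gross--Zagier) to convert the Cauchy--Schwarz inequality into the desired lower bound for $N(X)$.

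First I would restrict to positive $d$ with $(d,2q)=1$ for which $8d$ is a fundamental discriminant and $\omega(E \otimes \chi_{8d}) = -1$. The sign condition translates into a congruence on $d$ modulo $q$, so it picks out a positive-density subfamily; any lower bound $\gg X/\log X$ for the number of such $d$ with $L'(1/2, E \otimes \chi_{8d}) \ne 0$ yields the same bound (after a harmless rescaling of $X$) for $N(X)$.

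The first step is to establish a first-moment asymptotic of the shape
$$M_1(X) := \sideset{}{^\star}{\sum}_{\substack{(d,2q)=1 \\ \omega(E \otimes \chi_{8d})=-1}} L'(1/2, E \otimes \chi_{8d}) \, J(8d/X) \; = \; c_E \, X \log X + o(X \log X),$$
with $c_E > 0$ an explicit constant depending only on $E$ and $J$. This follows by applying the approximate functional equation to $L'(1/2, E \otimes \chi_{8d})$ and executing the average over $d$ as in Bump--Friedberg--Hoffstein \cite{BFH}, Murty--Murty \cite{murty} and Iwaniec \cite{I}; the additional factor of $\log X$ relative to the standard first moment of $L(1/2, E \otimes \chi_d)$ comes from differentiating the archimedean factor in the completed $L$-function. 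The second step is the positivity input: since $E$ is an elliptic curve and $\omega(E \otimes \chi_{8d}) = -1$, the Gross--Zagier formula gives $L'(1/2, E \otimes \chi_{8d}) \ge 0$, as it equals a positive constant times the N\'eron--Tate height of a Heegner point on the corresponding quadratic twist of $E$.

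The final step is Cauchy--Schwarz. By the positivity of Step two, restricting $M_1(X)$ to those $d$ with $L'(1/2, E \otimes \chi_{8d}) \ne 0$ introduces no change, so
$$M_1(X)^2 \; \le \; N(X) \cdot \sideset{}{^\star}{\sum}_{\substack{(d,2q)=1 \\ \omega=-1}} \bigl|L'(1/2, E \otimes \chi_{8d})\bigr|^2 J(8d/X) \; = \; N(X) \cdot \bigl( C_f \, \widetilde J(1) \, X \log^3 X + O(X (\log X)^{5/2+\epsilon}) \bigr),$$
by Theorem \ref{mainth}. Dividing and comparing orders of magnitude yields $N(X) \gg X/\log X$. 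The main obstacle I expect is the first step: one must produce a first-moment asymptotic with main term of order exactly $X \log X$ and positive leading constant, over the sign-restricted family. Sign separation by a congruence on $d$ is standard, but careful bookkeeping of the diagonal contributions coming from squares in the approximate functional equation is required; given the cited works this is essentially routine, but it is where almost all the remaining work lies.
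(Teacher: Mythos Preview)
Your approach is essentially the same as the paper's: combine a first-moment asymptotic of size $X\log X$ (cited from Murty--Murty \cite{murty} and Iwaniec \cite{I}) with the second-moment upper bound $X\log^3 X$ via Cauchy--Schwarz. The paper uses Corollary~\ref{totaldis} (the full family) rather than Theorem~\ref{mainth}, but this is immaterial.

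One remark: your invocation of Gross--Zagier positivity is unnecessary. Restricting the first-moment sum to those $d$ with $L'(1/2,E\otimes\chi_{8d})\neq 0$ changes nothing regardless of sign, since the omitted terms are zero; Cauchy--Schwarz then gives $|M_1(X)|^2 \le \bigl(\sum_{L'\neq 0} J\bigr)\cdot M_2(X) \ll N(X)\cdot M_2(X)$ directly. So the argument works for any weight-$2$ newform $f$, not only those attached to elliptic curves.
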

	\begin{proof}
		Main ingredients to prove Corollary \ref{th1} are  non-trivial  lower bound for the  first moment of  $L^\prime(1/2, f\otimes \chi_d)$ and an   upper bound for the second moment of  $L^\prime(1/2, f\otimes \chi_d)$ over $d$ (see \cite{parelly}). Indeed,  by  the work of Murty-Murty \cite{murty} and Iwaniec \cite{I}, we have 
		\begin{equation}\label{L^}
			\sideset{}{^\star}{\sum}_{(d,2q)=1}L^{\prime}({1}/{2},f\otimes \chi_{d}) \sim C X \, \log X,
		\end{equation}
		for some constant $C \neq 0$, and by Corollary  \ref{totaldis} we have 
		\begin{equation} \label{L^2}
			M^\prime(2):=\sideset{}{^\star}{\sum}_{(d,2q)=1}|L^{\prime}({1}/{2},f\otimes \chi_{d})|^2 \ll_{f}   X\log^3X. 
		\end{equation}
		Thus on    applying  Cauchy-Schwartz inequality   to the left-hand side of \eqref{L^}, and then using \eqref{L^2}, we have the corollary. 
	\end{proof}
	\section{Preliminaries}

	\begin{lemma}
		We have 
		$$L^{\prime}(1/2, f \otimes \chi_{d}) \, \, \mathbb{I}(\omega(f \otimes \chi_{d})=-1) = \left(1-i^{k} \eta \chi_{d}(-q)\right)\sum_{n=1}^{\infty} \frac{\lambda_{f}(n) \chi_{d}(n)}{n^{1/2}} \, W\left( \frac{n}{|d|}\right),$$
		where
		$$W(y)= \frac{1}{2 \pi i} \int_{(3)} \frac{\Gamma\left(u+{k}/{2}\right)}{\Gamma\left({k}/{2}\right)} \left(\frac{2 \pi y}{\sqrt{q}}\right)^{-u} \, \frac{du}{u^{2}}.$$
	\end{lemma}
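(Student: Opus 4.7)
My plan is to treat this as the standard approximate functional equation for $L'$ at the central point in the case of root number $-1$. Write $\gamma(s) := \bigl(|d|\sqrt{q}/(2\pi)\bigr)^{s}\,\Gamma\bigl(s+(k-1)/2\bigr)$ so that $\Lambda(s,f\otimes\chi_{d}) = \gamma(s)\,L(s,f\otimes\chi_{d})$. Inserting the Dirichlet series for $L$ into the definition of $W$ and interchanging sum and integral (justified by absolute convergence on $\Re(u)=3$), and using the identity $\gamma(1/2+u)/\gamma(1/2) = (|d|\sqrt{q}/2\pi)^{u}\,\Gamma(u+k/2)/\Gamma(k/2)$, the right-hand sum of the lemma rewrites as
\[
I(d) \;:=\; \frac{1}{\gamma(1/2)} \cdot \frac{1}{2\pi i} \int_{(3)} \Lambda(1/2 + u,\,f\otimes\chi_{d}) \, \frac{du}{u^{2}}.
\]
It therefore suffices to prove $L'(1/2, f\otimes\chi_{d})\,\mathbb{I}(\omega=-1) = (1-\omega)\,I(d)$, where $\omega := \omega(f\otimes\chi_d)$.

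Next, I would shift the contour in $I(d)$ from $\Re(u)=3$ to $\Re(u)=-3$. Since $\Lambda(\cdot,f\otimes\chi_{d})$ is entire, the only singularity crossed is the double pole at $u=0$, whose residue is $\Lambda'(1/2,f\otimes\chi_{d})$. On the shifted line I would substitute $u\mapsto -u$ and apply the functional equation $\Lambda(1/2-u,f\otimes\chi_{d}) = \omega\,\Lambda(1/2+u,f\otimes\chi_{d})$; the resulting integral then equals $\omega$ times the original integral on $\Re(u)=3$. Rearranging gives
\[
(1-\omega)\,\gamma(1/2)\,I(d) \;=\; \Lambda'(1/2,\,f\otimes\chi_{d}).
\]

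To conclude, I would observe that when $\omega=-1$, the functional equation $\Lambda(1/2)=\omega\,\Lambda(1/2)$ forces $\Lambda(1/2,f\otimes\chi_d)=0$ and hence, since $\gamma(1/2)\neq 0$, also $L(1/2,f\otimes\chi_d)=0$. Differentiating $\Lambda(s)=\gamma(s)L(s)$ at $s=1/2$ then collapses to $\Lambda'(1/2,f\otimes\chi_d) = \gamma(1/2)\,L'(1/2,f\otimes\chi_d)$, yielding $(1-\omega)\,I(d) = L'(1/2,f\otimes\chi_d)$ in this case. When $\omega=+1$ both sides of the lemma vanish trivially---the right by the factor $(1-\omega)$ and the left by the indicator. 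The only technical point requiring justification is the contour shift itself, which is routine: $\Gamma(u+k/2)$ decays exponentially in $|\Im u|$ by Stirling, and the size of $L$ on $\Re(u)=-3$ is controlled by the functional equation (transferring to $\Re(u)=3$, where the Dirichlet series converges absolutely), so the integrand decays exponentially on both vertical edges of the strip.
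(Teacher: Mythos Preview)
Your argument is correct and is the standard derivation of this approximate functional equation via a contour shift in $\int_{(3)}\Lambda(1/2+u,f\otimes\chi_d)\,u^{-2}\,du$. The paper itself gives no proof beyond citing Lemma~3.1 of Petrow, and your approach is precisely the expected one.
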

	
	\begin{proof}
		See Lemma 3.1 of \cite{petrow}.
	\end{proof}
	
	\begin{lemma} \label{G}
		There exists a smooth function $G: \mathbb{R} \to \mathbb{R}_{\geq 0}$ such that
		\begin{itemize}
			\item Support of $G$ lies in $[3/4,2]$
			\item $G(x) =1 $ for all $x \in [1,3/2]$
			\item $G(x)+G(x/2)=1$ for all $x \in [1,3]$.
		\end{itemize}
	\end{lemma}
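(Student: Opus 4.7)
The goal is a purely analytic partition-of-unity construction, so the plan is to produce $G$ by gluing together a single smooth transition function and its reflection.

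First I would fix an auxiliary smooth ``cut-off'' $\phi:\mathbb{R}\to[0,1]$ with $\phi(x)=0$ for $x\le 3/4$, $\phi(x)=1$ for $x\ge 1$, and $\phi$ strictly increasing on $(3/4,1)$. This is standard: take a nonnegative bump $\rho\in C_c^\infty((3/4,1))$ with $\int\rho=1$ and set $\phi(x)=\int_{-\infty}^{x}\rho(t)\,dt$. All derivatives of $\phi$ vanish at $3/4$ and $1$, which will be crucial for smoothness at the gluing points.

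Next I would define $G$ piecewise by
\begin{equation*}
G(x)=\begin{cases} 0, & x\le 3/4,\\ \phi(x), & 3/4\le x\le 1,\\ 1, & 1\le x\le 3/2,\\ 1-\phi(x/2), & 3/2\le x\le 2,\\ 0, & x\ge 2.\end{cases}
\end{equation*}
Smoothness at the four junctions $x=3/4,1,3/2,2$ is immediate, because $\phi$ is flat to infinite order at $3/4$ and $1$, and hence $1-\phi(\cdot/2)$ is flat at $3/2$ and $2$. The support and normalization conditions hold by construction, and $G(x)\ge 0$ because $\phi$ takes values in $[0,1]$.

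Finally I would verify the balancing identity $G(x)+G(x/2)=1$ for $x\in[1,3]$ by splitting into the three subranges dictated by the definition. For $x\in[1,3/2]$ one has $x/2\in[1/2,3/4]$, so $G(x)=1$ and $G(x/2)=0$. For $x\in[3/2,2]$ one has $x/2\in[3/4,1]$, so $G(x)=1-\phi(x/2)$ and $G(x/2)=\phi(x/2)$, summing to $1$. For $x\in[2,3]$ one has $x/2\in[1,3/2]$, so $G(x)=0$ and $G(x/2)=1$. There is really no obstacle here; the only point to be attentive to is that the same cut-off $\phi$ is used to define $G$ on $[3/4,1]$ and to define $1-G$ on $[3/2,2]$, which is precisely what forces the dyadic relation to hold exactly rather than approximately.
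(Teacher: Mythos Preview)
Your construction is correct and follows the same template as the paper: a smooth transition on $[3/4,1]$, the constant $1$ on $[1,3/2]$, and then $G(x)=1-G(x/2)$ on $(3/2,2]$ to enforce the dyadic identity. The only difference is cosmetic---the paper writes down the explicit transition $e^{16}e^{-1/(x-3/4)^{2}}$ on $[3/4,1)$ rather than invoking a generic integrated bump $\phi$; if anything, your version is more careful about smoothness at the junction $x=1$.
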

	
	\begin{proof}
See   \cite[(2.11)]{Li}. 
	\end{proof}
	
	Let $H$ be a positive integer. Then the function $F(x)$ defined by
	$$F(x) = G(x)+G(x/2)+\ldots+G(x/2^{H}),$$
	satisfies $F(x)=1$ for all $x \in [1, 3. 2^{H-1}]$ and is  supported in $[3/4, 2^{H+1}]$. 
	
	\begin{remark} \label{V}
		\begin{itemize}
			\item In particular the function
			$$V(x) = G(2x) + G(x) + G(x/2)$$ satisfies $V(x)=1$ for all $x \in [1/2,3]$.
			\item We have the smooth partition of unity
			$$\sum_{H=0}^{\infty} G\left(\frac{x}{2^{H}}\right)=1$$
			for all $x \in [1, \infty)$.
		\end{itemize}
	\end{remark}

	The following proposition is due to Xiannan Li \cite[Proposition 3.2]{Li}.
	\begin{proposition} \label{mainpropo}
		Let the smooth function $G(x)$ as above. Then we have 
		$$\sideset{}{^\star}{\sum}_{M \leq |m| \leq 2M} \Big \vert \sum_{n=1}^{\infty} \frac{\lambda_{f}(n)}{n^{1/2+it}} \, \left(\frac{m}{n}\right) G\left(\frac{n}{N} \right)\Big \vert^{2} \ll_{f} \left(1+|t|\right)^{2} \left(M + N \, \log \left(2+\frac{N}{M}\right)\right).$$
	\end{proposition}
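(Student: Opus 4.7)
The plan is to open the square into a triple sum over $n_1, n_2, m$ and to analyze it via Poisson summation on $m$, separating the contribution according to whether $n_1 n_2$ is a perfect square. After replacing the characteristic function of the range $M \leq |m| \leq 2M$ by a smooth weight $w(m/M)$ and removing the squarefreeness (or fundamental-discriminant) restriction on $m$ via Möbius inversion $\sum_{a^2 \mid m}\mu(a)$, the left-hand side reduces, up to negligible terms, to
\begin{equation*}
\sum_{n_1, n_2 \geq 1} \frac{\lambda_f(n_1)\lambda_f(n_2)}{(n_1 n_2)^{1/2}} \left(\frac{n_1}{n_2}\right)^{it} G\!\left(\tfrac{n_1}{N}\right) G\!\left(\tfrac{n_2}{N}\right) \sum_{(a,\,2q n_1 n_2)=1}\mu(a) \sum_{m} w\!\left(\tfrac{a^2 m}{M}\right) \left(\tfrac{a^2 m}{n_1 n_2}\right).
\end{equation*}

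Next, I would split the outer double sum according to whether $n_1 n_2$ is a perfect square. In the square case the Jacobi symbol $(a^2 m / n_1 n_2)$ equals the principal character modulo the odd part of $n_1 n_2$, so the inner $m$-sum is of size $\ll M/a^2$; writing the diagonal pairs via $n_1 = d r_1^2,\; n_2 = d r_2^2$ with $d$ squarefree and invoking the Hecke relation $\lambda_f(n_1)\lambda_f(n_2) = \sum_{e \mid (n_1,n_2)}\lambda_f(n_1 n_2/e^2)$, the arithmetic sum is controlled by the Rankin–Selberg estimate $\sum_{n \leq N} |\lambda_f(n)|^2 \ll_f N$, yielding a contribution of size $\ll M(1+|t|)^2$. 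The factor $(1+|t|)^2$ arises by representing $G$ through its Mellin transform and shifting contours while absorbing the twist $(n_1/n_2)^{it}$. In the non-square case, write $n_1 n_2 = k \ell^2$ with $k > 1$ squarefree; the character $(\cdot/k)$ is primitive non-principal modulo $k$, and Poisson summation on $m$ produces a dual sum of length $\ll k a^2/M$, weighted by Gauss sums of modulus $\sqrt{k}$. Summing the resulting expression against $|\lambda_f(n)| \leq \tau(n)$ over dyadic ranges and over squarefree kernels $k \leq 4N^2$ then gives a total contribution of size $\ll N(1+|t|)^2 \log(2+N/M)$, with the logarithm coming from the truncated length of the Poisson-dual sum.

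The main obstacle will be the sharp treatment of the non-square term: one must pin down the length of the dual sum in terms of the squarefree kernel $k$ of $n_1 n_2$ rather than $n_1 n_2$ itself, which is precisely what produces the logarithmic improvement $\log(2+N/M)$ over the Heath–Brown quadratic large sieve bound of shape $(NM)^\epsilon(M+N)$. This entails a careful factorization of the relevant Gauss sums, particularly at the prime $2$ and at primes dividing $q$, as well as a uniform treatment of the Möbius inversion in $a$ over the ranges $a^2 \ll M$ and $a^2 \gg M$. A secondary technical point is maintaining the polynomial control $(1+|t|)^2$: by representing $G$ and $w$ through their Mellin transforms, using their rapid decay, and performing contour shifts of bounded width, one avoids losses larger than $(1+|t|)^2$, which matches the polynomial convexity bound for the Rankin–Selberg $L$-function $L(s, f \otimes f)$ governing the diagonal contribution.
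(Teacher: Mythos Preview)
The paper does not supply its own proof of this proposition; it is quoted from Li \cite[Proposition~3.2]{Li}, so the comparison must be with Li's argument there.

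Your outline starts correctly---open the square, remove the squarefreeness on $m$ by M\"obius, apply Poisson in $m$---and the diagonal (square) contribution is handled essentially as you say. The gap is in the off-diagonal. After Poisson the non-square part is, schematically,
\[
M\sum_{h\neq 0}\ \sum_{\substack{n_1,n_2\sim N\\ n_1n_2\neq\square}}\frac{\lambda_f(n_1)\lambda_f(n_2)}{(n_1n_2)^{3/2}}\,G_h(n_1n_2)\,\widehat{w}\!\left(\frac{hM}{n_1n_2}\right),
\]
with dual length $|h|\ll N^2/M$. If at this point you insert $|\lambda_f(n)|\le\tau(n)$ as you propose, all cancellation is destroyed: even after reorganising by the squarefree kernel $k$ of $n_1n_2$ (so that $\sqrt{k}/\sqrt{n_1n_2}=1/\ell$ when $n_1n_2=k\ell^2$) one is left with $\sum_{\ell}\ell^{-1}\sum_{k\sim N^2/\ell^2}\tau(k\ell^2)^{O(1)}$, which is of order $N^{2+\epsilon}$, not $N\log(2+N/M)$. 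At best this route recovers Heath--Brown quality $(M+N)N^\epsilon$; the sharp logarithm does \emph{not} fall out of ``the truncated length of the Poisson-dual sum''.

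Li's proof proceeds differently and crucially does not take absolute values in the $n$-variables. After Poisson in $m$, the Gauss-type sum $G_h(n_1n_2)$ is unwound via its multiplicativity (cf.\ Lemma~\ref{Gausslemma}) so that, for each dual frequency $h$, the $n_i$-sums reassemble into smoothed sums of the very same shape as the original inner sum, now twisted by the quadratic character attached to the squarefree part of $h$. One then applies Cauchy--Schwarz in $h$ and feeds these sums back into the proposition itself with altered length parameters; iterating this self-referential inequality a bounded number of times is what collapses the $N^\epsilon$ loss to $\log(2+N/M)$. The factor $(1+|t|)^{2}$ emerges from the contour shifts performed during this recursion rather than from a Rankin--Selberg convexity bound.
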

	
	We record the following lemma from Xiannan Li \cite[Lemma 6.3]{Li}.
	\begin{lemma} \label{coprimelargesieve}
		For any $\mathcal{Y}, N \geq 1$, real $t$, positive integer $\ell$, we have
		$$\sum_{\substack{(d,2)=1 \\ d \leq \mathcal{Y}}} \Big \vert \sum_{ (n,\ell)=1} \frac{\lambda_{f}(n)}{n^{\frac{1}{2}+it}} \, \left(\frac{8d}{n}\right)  \, G\left(\frac{n}{N}\right)\Big \vert^{2} \ll \tau(d)^{5} \, \mathcal{Y} \left(1+|t|\right)^{3} \log \left(2+|t|\right),$$
		where $\tau$ is the divisor function.
	\end{lemma}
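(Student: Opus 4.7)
The plan is to reduce Lemma \ref{coprimelargesieve} to Proposition \ref{mainpropo} in two moves. First, I strip the coprimality condition $(n,\ell)=1$ using Möbius inversion and Hecke multiplicativity, which produces a $\tau(\ell)^{O(1)}$ factor. Second, I dyadically decompose the range $d\leq\mathcal{Y}$ so that the outer sum matches the dyadic window $M\leq|m|\leq 2M$ (with $m=8d$) in Proposition \ref{mainpropo}.

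For the first move, using $\mathbb{I}_{(n,\ell)=1}=\sum_{a\mid(n,\ell)}\mu(a)$, substituting $n=ab$, and applying Cauchy--Schwarz over the $\tau(\ell)$ divisors $a\mid\ell$ bounds the square of the inner sum by
$$\tau(\ell)\sum_{a\mid\ell}\Big|\sum_b\frac{\lambda_f(ab)}{(ab)^{1/2+it}}\Big(\frac{8d}{ab}\Big)G\!\Big(\frac{ab}{N}\Big)\Big|^2.$$
The Hecke relation $\lambda_f(ab)=\sum_{e\mid(a,b)}\mu(e)\lambda_f(a/e)\lambda_f(b/e)$, the multiplicativity of the Jacobi symbol (so that $(8d/ab)=(8d/a)(8d/b)$ for odd $ab$), and one more Cauchy--Schwarz express each inner sum, up to a total $\tau(\ell)^{O(1)}$ loss, as a bounded combination of model sums
$$S_\alpha(d)\,:=\,\sum_c\frac{\lambda_f(c)}{c^{1/2+it}}\Big(\frac{8d}{c}\Big)G\!\Big(\frac{\alpha c}{N}\Big),\qquad \alpha\mid\ell^2.$$
Thus, modulo the promised $\tau(\ell)^5$ factor, it suffices to prove
$$\sum_{\substack{(d,2)=1\\d\leq\mathcal{Y}}}|S_\alpha(d)|^2\ \ll\ \mathcal{Y}\,(1+|t|)^3\log(2+|t|),$$
uniformly in $\alpha\mid\ell^2$.

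For the second move, I decompose $d$ dyadically as $d\in(D,2D]$ with $D=2^j$, $0\leq j\leq\log_2\mathcal{Y}$, and apply Proposition \ref{mainpropo} with $m=8d$, $M\asymp D$, and cut-off scale $N/\alpha$. This gives $(1+|t|)^2\,(D+(N/\alpha)\log(2+N/(\alpha D)))$ per dyadic block. Summing the $D$-piece over $j$ telescopes to $\mathcal{Y}(1+|t|)^2$. The $N/\alpha$-piece is handled in two cases. If $N\leq\mathcal{Y}(1+|t|)^{2+\epsilon}$, it is subsumed into the main bound. If $N$ is larger, I invert by Mellin, writing
$$S_\alpha(d)=\frac{1}{2\pi i}\int_{(\sigma)}L\!\left(\tfrac12+it+s,\,f\otimes\chi_{8d}\right)\widetilde{G}(s)\,(N/\alpha)^{s}\,ds,$$
shift the contour to $\sigma=-1/2$, and apply the functional equation of $L(s,f\otimes\chi_{8d})$ to convert the smooth partial sum into its dual, of effective length $\ll d^{2}(1+|t|)^{2}/N\ll 1$, which is negligible by the rapid decay of $\widetilde{G}$ on vertical lines.

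The hard part will be the tight bookkeeping needed to extract exactly $\tau(\ell)^5(1+|t|)^3\log(2+|t|)$ without incurring extra factors of $\log\mathcal{Y}$ or $\log N$. The additional $(1+|t|)$ beyond the $(1+|t|)^2$ of Proposition \ref{mainpropo}, and the $\log(2+|t|)$, both originate in the contour-shift/functional-equation step of the large-$N$ regime, where the gamma quotient $|\Gamma(1/2-it+(k-1)/2)/\Gamma(1/2+it+(k-1)/2)|\asymp 1$ and the integral of $\widetilde{G}$ along the critical line contribute. Any stray $\log\mathcal{Y}$ from the dyadic decomposition is absorbed into $\log(2+|t|)$ when $|t|$ is at least logarithmic in $\mathcal{Y}$, and otherwise dominated by the linear $\mathcal{Y}$ bound.
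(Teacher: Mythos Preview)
The paper does not prove this lemma at all; it merely records it as \cite[Lemma~6.3]{Li}. So there is no proof in the paper to compare against, and your outline---M\"obius plus Hecke multiplicativity to strip the coprimality, then a combination of Proposition~\ref{mainpropo} and the functional equation---is almost certainly the intended route and is in the right spirit.

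That said, your execution has a genuine gap in the small/large-$N$ dichotomy. The dual sum after the functional equation of $L(s,f\otimes\chi_{8d})$ has effective length $\asymp d^{2}(1+|t|)^{2}/N$, and since $d$ ranges up to $\mathcal{Y}$ this is $\asymp \mathcal{Y}^{2}(1+|t|)^{2}/N$, not $\ll 1$, when $N$ merely exceeds $\mathcal{Y}(1+|t|)^{2+\epsilon}$. So your ``large $N$'' regime does not actually make the dual negligible, and the range $\mathcal{Y}(1+|t|)^{2}\lesssim N\lesssim \mathcal{Y}^{2}(1+|t|)^{2}$ is left uncovered. Relatedly, your absorption of stray $\log\mathcal{Y}$ factors into $\log(2+|t|)$ is simply false when $t$ is bounded: summing $N\log(2+N/D)$ over dyadic $D\leq\mathcal{Y}$ with $N\leq\mathcal{Y}$ already costs $(\log\mathcal{Y})^{2}$, which the target bound does not permit.

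The fix is to make the threshold depend on the dyadic block: for each $D$, apply Proposition~\ref{mainpropo} directly when $N\leq D(1+|t|)$, so that $N\log(2+N/D)\ll D(1+|t|)\log(2+|t|)$, and apply it to the dual sum (of length $\asymp D^{2}(1+|t|)^{2}/N<D(1+|t|)$) when $N>D(1+|t|)$. Either way the $D$-block contributes $\ll D(1+|t|)^{3}\log(2+|t|)$, and summing over dyadic $D\leq\mathcal{Y}$ then genuinely telescopes to $\mathcal{Y}(1+|t|)^{3}\log(2+|t|)$ with no leftover $\log\mathcal{Y}$.
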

	
	We record here the following lemma from Xiannan Li \cite[Lemma 2.4]{Li}
	\begin{lemma} \label{Poisson}
		Let $H : \mathbb{R}_{+} \to \mathbb{R} $ be a Schwartz class function.  Let $n$ be an odd integer. Then we have
		\begin{align*}
			\sum_{ (d,2)=1} \left(\frac{8d}{n}\right) \, H\left(\frac{d}{X}\right) &= \delta_{\square}(n) \, \frac{X}{2} \, \check{H} (0) \, \prod_{p \mid n} \left(1-\frac{1}{p}\right) \\
			&+ \frac{X}{2} \sum_{ k \neq 0} (-1)^{k} \frac{G_{k}(n)}{n} \, \check{H}\left(\frac{X k}{2n}\right),
		\end{align*}
		where $\check{H}$ is Fourier-type transform which defined as
		$$\check{H}(y) = \int_{-\infty}^{\infty} \left(\cos\left(2 \pi xy + \sin\left(2 \pi xy\right)\right)\right) H(x) \, dx.$$
		Moreover, for $y \neq 0$ we also have
		$$\check{H}(y)= \frac{1}{ 2 \pi i} \int_{(1/2)} \tilde{H} (1-s) \, \Gamma(s) \left(\cos +\frac{y}{|y|} \sin\right)\left(\frac{\pi s}{2}\right) \, \left(2 \pi |y|\right)^{-s} \, ds,$$
		where
		$$\tilde{H}(s) = \int_{0}^{\infty} H(x) x^{s-1} \, dx$$
		is the Mellin transform of $H$.
		And $$G_{k}(n) = \left(\frac{1-i}{2}+\left(\frac{-1}{n}\right) \frac{1+i}{2}\right) \sum_{a \, \rm mod \, n} \left(\frac{a}{n}\right) \, e\left(\frac{ak}{n}\right)$$
		is the Gauss like sum. 
	\end{lemma}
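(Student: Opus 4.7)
My plan is to apply classical Poisson summation after breaking the $d$-sum into residue classes modulo $2n$. Since the Kronecker symbol $\left(\frac{8d}{n}\right)$ and the parity of $d$ both depend only on $d \bmod 2n$ (because $n$ is odd), I would write $d = a + 2n\ell$ with $a$ running over odd residues mod $2n$ and $\ell \in \mathbb{Z}$, obtaining
\[
\sum_{(d,2)=1}\left(\frac{8d}{n}\right) H\!\left(\frac{d}{X}\right) = \sum_{\substack{a \bmod 2n \\ a \text{ odd}}} \left(\frac{8a}{n}\right) \sum_{\ell \in \mathbb{Z}} H\!\left(\frac{a+2n\ell}{X}\right).
\]
Classical Poisson summation then turns the inner sum into
\[
\frac{X}{2n}\sum_{k \in \mathbb{Z}} \widehat{H}\!\left(\frac{Xk}{2n}\right) \sum_{\substack{a \bmod 2n \\ a \text{ odd}}} \left(\frac{8a}{n}\right) e\!\left(\frac{ak}{2n}\right),
\]
where $\widehat{H}$ is the standard Fourier transform of $H$.

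The next step is to evaluate the character sum
\[
S_{k}(n) := \sum_{\substack{a \bmod 2n \\ a \text{ odd}}} \left(\frac{8a}{n}\right) e\!\left(\frac{ak}{2n}\right).
\]
I would use the Chinese Remainder Theorem on the factorization $2n = 2 \cdot n$: the oddness of $a$ forces $a \equiv 1 \bmod 2$, producing the clean factor $(-1)^{k}$ from $e(ak/2)$ at the $2$-part, and the $n$-part reduces to $\sum_{a \bmod n}\left(\frac{a}{n}\right) e(ak/n)$ after pulling out the constant $\left(\frac{8}{n}\right)$. The latter is a standard quadratic Gauss-like sum; the normalization factor $\left(\frac{1-i}{2} + \left(\frac{-1}{n}\right)\frac{1+i}{2}\right)$ in $G_{k}(n)$ is precisely the Gauss-sum constant $\varepsilon_{n}$ that appears when one evaluates $\sum_{a \bmod n}\left(\frac{a}{n}\right) e(a/n)$ for odd $n$ (equivalently, it encodes how the quadratic character behaves at $\infty$ under the archimedean reciprocity).

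For the main term, I would note that $k=0$ forces $S_{0}(n)$ to vanish unless the character $\left(\frac{\cdot}{n}\right)$ is trivial, which happens exactly when $n$ is a perfect square; in that case standard Euler-product bookkeeping gives the factor $\prod_{p \mid n}(1-1/p)$ and the contribution $\delta_{\square}(n)\frac{X}{2}\,\check H(0)$ since $\check H(0)=\widehat H(0)$ by the definition of $\check H$. For the off-diagonal $k \neq 0$ terms, combining the sine/cosine decomposition of the Fourier transform $\widehat H(Xk/2n)$ with the complex phase $\varepsilon_{n}$ absorbs into the real-valued kernel $\check H(Xk/2n) = \int (\cos + \sin)(2\pi x\, Xk/2n)H(x)\,dx$ with the sign of $k$ dictating the $\pm$ sign on $\sin$, giving the $y/|y|$ factor in the Mellin representation. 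The Mellin formula for $\check H(y)$ at $y\neq 0$ is then obtained by writing $\tilde H$ via the standard Mellin inversion and folding the cosine/sine integrals against $\Gamma(s)\cos(\pi s/2)$ and $\Gamma(s)\sin(\pi s/2)$ respectively, then shifting to $\Re(s)=1/2$.

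The main obstacle will not be the Poisson step itself but rather the careful bookkeeping of signs and twists by $\left(\frac{-1}{n}\right)$ required to arrive at the explicit Gauss-like sum $G_{k}(n)$ with exactly the prescribed normalization, together with the verification that the sine contribution flips sign with $k$ so as to produce the $y/|y|$ factor; handling negative $d$ in the original sum, where $H(d/X)$ is evaluated at negative arguments, is the source of this sign sensitivity and is exactly what the $\varepsilon_{n}$-prefactor and the non-standard transform $\check H$ are designed to absorb.
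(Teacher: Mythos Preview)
The paper does not supply its own proof of this lemma; it is simply quoted from Li \cite[Lemma~2.4]{Li}, who in turn follows Soundararajan's Poisson summation formula for quadratic characters. Your outline---residue classes modulo $2n$, Poisson, then CRT on the character sum to split off $(-1)^{k}$ from the $2$-part and the Gauss sum from the $n$-part, followed by the $\cos/\sin$ recombination that converts the ordinary Fourier transform $\widehat H$ into the real kernel $\check H$---is exactly the argument found in those sources, so there is no alternative route to compare against.

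Two small remarks on the bookkeeping. First, the factor $\left(\tfrac{8}{n}\right)=\left(\tfrac{2}{n}\right)$ does not simply ``pull out'' and survive: after CRT the mod-$n$ variable is $2\bar 2\,b$, and the change of variable $b\mapsto 2b$ in the Gauss sum produces a second $\left(\tfrac{2}{n}\right)$ which cancels it, which is why $G_{k}(n)$ carries no such factor. Second, since $H:\mathbb{R}_{+}\to\mathbb{R}$, the sum is over \emph{positive} odd $d$; the sign sensitivity that yields the $y/|y|$ in the Mellin representation comes from the sign of the dual variable $k$ after Poisson, not from negative $d$. Your identification of the prefactor $\tfrac{1-i}{2}+\left(\tfrac{-1}{n}\right)\tfrac{1+i}{2}$ as the Gauss-sum constant (it is $\bar\varepsilon_n$) is what makes that recombination work: pairing the $k$ and $-k$ terms and using $\tau_{-k}(n)=\left(\tfrac{-1}{n}\right)\tau_k(n)$ collapses the complex $\widehat H$ into the real $\check H$.
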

	
	In the following lemma we record properties of $G_{k}(n)$.
	\begin{lemma} \label{Gausslemma}
		For $m,n$ relatively prime odd integers, we have
		$$G_{k}(mn) =G_{k}(m) G_{k}(n)$$
		and for $p^{\alpha} || k $ (set $\alpha = \infty$ for $k=0$), then
		\[
		G_{k}(p^{\beta}) =
		\begin{cases}
			0, \quad  &\text{if} \quad \beta \leq \alpha \, \, \, \text{is odd} \\
			\phi(p^{\beta}), \quad  &\text{if} \quad \beta \leq \alpha \, \, \, \text{is even}\\
			-p^{\alpha}, \quad  &\text{if} \quad \beta = \alpha+1 \, \, \, \text{is even}\\
			\left(\frac{k p^{-\alpha}}{p}\right) p^{\alpha} \, \sqrt{p}, \quad  &\text{if} \quad \beta = \alpha +1\, \, \, \text{is odd} \\
			0, \quad  &\text{if} \quad \beta \geq  \alpha+2 \, \, \, \text{is odd}.
		\end{cases}
		\] 
	\end{lemma}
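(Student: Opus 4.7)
The plan is to prove both parts of the lemma by direct computation starting from the definition. Write $G_k(n) = \varepsilon_n S(n)$ with
$$\varepsilon_n := \frac{1-i}{2} + \left(\frac{-1}{n}\right)\frac{1+i}{2}, \qquad S(n) := \sum_{a \bmod n}\left(\frac{a}{n}\right) e\!\left(\frac{ak}{n}\right).$$
The multiplicativity of $G_k$ will follow from the multiplicativity of $S$ modulo a single reciprocity sign, which is absorbed by a matching twist of $\varepsilon_n$.

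For the multiplicativity I would parametrize residues $a \bmod mn$ via CRT as $a \equiv \bar n n a_1 + \bar m m a_2 \pmod{mn}$, where $\bar n$, $\bar m$ are inverses mod $m$, $n$ respectively. The Jacobi symbol factors as $\left(\frac{a_1}{m}\right)\left(\frac{a_2}{n}\right)$, and the exponential splits as $e(\bar n a_1 k/m)\,e(\bar m a_2 k/n)$. The substitutions $a_1 \mapsto n a_1$ and $a_2 \mapsto m a_2$ then convert each factor into $S(m)$ and $S(n)$ at the cost of an extra $\left(\frac{n}{m}\right)\left(\frac{m}{n}\right) = (-1)^{(m-1)(n-1)/4}$ by quadratic reciprocity, so $S(mn) = (-1)^{(m-1)(n-1)/4} S(m) S(n)$. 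A four-case check on $m, n \bmod 4$ verifies $\varepsilon_{mn} (-1)^{(m-1)(n-1)/4} = \varepsilon_m \varepsilon_n$, the signs cancel, and $G_k(mn) = G_k(m) G_k(n)$.

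For the prime-power evaluation, write $k = p^\alpha k'$ with $(k',p)=1$ and use $\left(\frac{a}{p^\beta}\right) = \left(\frac{a}{p}\right)^\beta$. When $\beta \leq \alpha$ the exponential is trivial, and summing $\left(\frac{a}{p}\right)^\beta$ over $a \bmod p^\beta$ gives $0$ for $\beta$ odd and $\phi(p^\beta)$ for $\beta$ even. For $\beta \geq \alpha+1$, the decomposition $a = a_0 + p a_1$ with $a_0 \bmod p$ and $a_1 \bmod p^{\beta-1}$ yields
$$S(p^\beta) = \left(\sum_{a_0 \bmod p}\left(\frac{a_0}{p}\right)^\beta e\!\left(\frac{a_0 k'}{p^{\beta-\alpha}}\right)\right)\!\left(\sum_{a_1 \bmod p^{\beta-1}} e\!\left(\frac{a_1 k'}{p^{\beta-\alpha-1}}\right)\right).$$
The inner sum equals $p^{\beta-1}$ if $\beta = \alpha+1$ and vanishes if $\beta \geq \alpha+2$ (it factors as $p^\alpha$ times a complete non-trivial additive character sum). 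In the surviving case $\beta = \alpha+1$ the outer sum is a Ramanujan-type sum equal to $-1$ when $\beta$ is even, and, after the change of variables $a_0 \mapsto \overline{k'} a_0$, equal to $\left(\frac{k'}{p}\right) g_p$ when $\beta$ is odd, where $g_p$ denotes the classical quadratic Gauss sum modulo $p$.

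The last step is to attach the correct $\varepsilon_{p^\beta}$. Since $\left(\frac{-1}{p^\beta}\right) = \left(\frac{-1}{p}\right)^\beta$, one has $\varepsilon_{p^\beta} = 1$ whenever $\beta$ is even, giving $G_k(p^{\alpha+1}) = -p^\alpha$ in the even subcase. For $\beta = \alpha+1$ odd, $\varepsilon_{p^\beta} = \varepsilon_p$, and the classical values $g_p = \sqrt p$ or $i\sqrt p$ according as $p \equiv 1$ or $3 \pmod 4$ give $\varepsilon_p g_p = \sqrt p$ uniformly, producing the stated $\left(\frac{kp^{-\alpha}}{p}\right) p^\alpha \sqrt p$. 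I expect the main technical nuisance to be precisely this bookkeeping of the $\varepsilon_n$-factors across the two parts; once the identities $\varepsilon_{mn}(-1)^{(m-1)(n-1)/4} = \varepsilon_m \varepsilon_n$ and $\varepsilon_p g_p = \sqrt p$ are verified, everything else reduces to standard complete-sum manipulations.
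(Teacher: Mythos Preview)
Your argument is correct: the CRT decomposition together with quadratic reciprocity and the identity $\varepsilon_{mn}(-1)^{(m-1)(n-1)/4}=\varepsilon_m\varepsilon_n$ yields the multiplicativity, and your case analysis for $G_k(p^\beta)$ via the splitting $a=a_0+pa_1$ reproduces each line of the table (and in fact shows $G_k(p^\beta)=0$ for every $\beta\ge\alpha+2$, not only the odd ones). The paper itself gives no proof of this lemma at all; it simply records the result as known (it is the standard computation going back to Soundararajan's work on moments of quadratic Dirichlet $L$-functions, and is restated in Soundararajan--Young and in Li), so your write-up supplies strictly more detail than the paper does.
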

	
	\section{Proof  of  Theorem \ref{mainth}}
	We are interested in  getting an  asymptotic formula for the smoothed sum
	$$M^\prime_{\text{s}} (2)=\sideset{}{^\star}{\sum}_{\substack{(d,2q)=1 \\ \omega(f \otimes \chi_{8d})=-1}} \vert L^{\prime}\left(1/2, f\otimes \chi_{8d}\right) \vert^{2} \, J\left(\frac{8 d}{X} \right),$$
	where $J$ is a non-negative smooth function suppored in $[1/2, 2]$. Following Lemma 3.1 of Petrow \cite{petrow}, we express $L^\prime(1/2, f \otimes \chi_{8d} )$ as a Dirichlet series using the approximate functional equation 
	$$L^{\prime}(1/2, f \otimes \chi_{8d}) \, = \left(1-i^{k} \eta \chi_{8d}(-q)\right)\sum_{n=1}^{\infty} \frac{\lambda_{f}(n) \chi_{d}(n)}{n^{1/2}} \, W\left( \frac{n}{|8d|}\right),$$
	which holds if $\omega(f \otimes \chi_{8d})=-1$. Here the cut-off function $W$ is given by 
	
	$$W(y)= \frac{1}{2 \pi i} \int_{(3)} \frac{\Gamma\left(u+{k}/{2}\right)}{\Gamma\left({k}/{2}\right)} \left(\frac{2 \pi y}{\sqrt{q}}\right)^{-u} \, \frac{du}{u^{2}}.$$
	Next we split the Dirichlet series  into the main part  and the  tail part.  To this end, we set
	$$\mathcal{A}(8d):= \mathcal{A}(1/2, f \otimes \chi_{8d}) = (1-i^{k} \eta \chi_{8d}(q))\sum_{n=1}^{\infty} \frac{\lambda_{f}(n) \chi_{8d}(n)}{n^{1/2}} \, W\left( \frac{n}{\mathcal{M}}\right),$$
	where $\mathcal{M} = X/(\log X)^{1000}$, and 
	$$\mathcal{B}(8d):= \mathcal{B}(1/2, f\otimes \chi_{8d}) = L^\prime(1/2, f \otimes \chi_{8d}) - \mathcal{A}(1/2, f \otimes \chi_{8d}).$$
	Theorem \ref{mainth} follows from the following propositions.
	\begin{proposition} \label{errorbound}
		We have
		\begin{equation}\notag
			\sideset{}{^\star}{\sum}_{(d, 2q)=1} \Big \vert \mathcal{B}(1/2, f\otimes \chi_{8d}) \Big \vert^{2} \, J\left(\frac{8d}{X}\right)\ll_{f,\epsilon}  X(\log X)^{2} (\log \log X)^4.
		\end{equation}
	\end{proposition}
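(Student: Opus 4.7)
My plan is to use a Mellin representation to expose the $\log(|8d|/\mathcal{M}) = O(\log\log X)$ saving inherent in $\mathcal{B}(8d)$, reducing the second-moment estimate to an application of Lemma \ref{coprimelargesieve}.

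First, inserting the Mellin representation of $W$ into the definition of $\mathcal{B}(8d)$ and interchanging sum and integral gives
\[
\mathcal{B}(8d) = (1-\omega(f\otimes\chi_{8d})) \cdot \frac{1}{2\pi i}\int_{(\epsilon)} F(u)\bigl(|8d|^{u}-\mathcal{M}^{u}\bigr) L(1/2+u, f\otimes\chi_{8d})\,\frac{du}{u^{2}},
\]
where $F(u) = \Gamma(u+k/2)\Gamma(k/2)^{-1}(2\pi/\sqrt{q})^{-u}$. The algebraic identity
\[
|8d|^{u}-\mathcal{M}^{u} = u\log(|8d|/\mathcal{M})\int_{0}^{1} Y_{\theta}^{u}\,d\theta, \qquad Y_{\theta} = \mathcal{M}^{1-\theta}|8d|^{\theta},
\]
cancels one factor of $u$ and, after interchanging the order of integration, yields
\[
\mathcal{B}(8d) = (1-\omega(f\otimes\chi_{8d}))\,\log(|8d|/\mathcal{M})\int_{0}^{1} S_{\theta}(d)\,d\theta,
\]
where $S_{\theta}(d) = \sum_{n} \lambda_{f}(n)\chi_{8d}(n) n^{-1/2}\,\widetilde{W}(n/Y_{\theta})$ for a smooth cutoff $\widetilde{W}(y) = \frac{1}{2\pi i}\int_{(\epsilon)} F(u) y^{-u}\,du/u$, which equals $1+O(y^{\delta})$ as $y\to 0$ and decays rapidly as $y\to\infty$.

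Next, applying Cauchy--Schwarz in the $\theta$-variable together with the bounds $|1-\omega(f\otimes\chi_{8d})|\leq 2$ and $\log(|8d|/\mathcal{M})\ll\log\log X$, the left-hand side of the proposition is bounded by
\[
(\log\log X)^{2}\int_{0}^{1}\sideset{}{^\star}{\sum}_{(d,2q)=1}|S_{\theta}(d)|^{2}\,J(8d/X)\,d\theta.
\]
For each fixed $\theta\in[0,1]$, the scale $Y_\theta$ lies between $\mathcal{M}$ and $|8d|\asymp X$. I would split $\widetilde{W}(n/Y_\theta) = \sum_{N}\widetilde{W}(n/Y_{\theta})G(n/N)$ dyadically (with $N$ ranging over powers of $2$ up to $Y_\theta(\log X)^A$), apply Cauchy--Schwarz across the $O(\log X)$ pieces, and invoke Lemma \ref{coprimelargesieve} on each dyadic block---after expanding the smooth factor $\widetilde{W}(n/Y_{\theta})$ via its Mellin transform, which reduces each piece to the form $\sum_n \lambda_f(n)\chi_{8d}(n)n^{-1/2-it}G(n/N)$ inside a rapidly convergent integral over the Mellin variable. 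This will yield
\[
\sideset{}{^\star}{\sum}_{(d,2q)=1}|S_{\theta}(d)|^{2}\,J(8d/X)\ll X(\log X)^{2}
\]
uniformly in $\theta$, producing the claimed bound.

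The main obstacle is bookkeeping the various $\log X$ losses: the dyadic decomposition contributes one $\log X$, Cauchy--Schwarz across the dyadic pieces another, and Lemma \ref{coprimelargesieve} brings a $(1+|t|)^{3}\log(2+|t|)$ factor against a Gamma-decaying kernel in the Mellin variable. Keeping the product within the target $(\log X)^{2}(\log\log X)^{4}$ requires a careful weighted Cauchy--Schwarz in the Mellin integral, using the Gamma decay to stabilize the $t$-integration. A secondary point is the non-standard cutoff $\widetilde{W}(n/Y_{\theta}) G(n/N)$ appearing in the application of Lemma \ref{coprimelargesieve}, which is handled by the Mellin trick described above and does not introduce further essential loss.
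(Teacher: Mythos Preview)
Your argument is correct and arrives at the stated bound, but it is organized differently from the paper's proof. The paper does not use the interpolation identity $|8d|^{u}-\mathcal{M}^{u}=u\log(|8d|/\mathcal{M})\int_{0}^{1}Y_{\theta}^{u}\,d\theta$. Instead it inserts the dyadic partition in $n$ first, separates variables via Mellin, and then splits the dyadic scale $N$ into three ranges $N\le\mathcal{M}$, $\mathcal{M}<N\le X$, $N>X$, shifting the $w$-contour to $\Re w=-1$, $1/\log X$, $4$ respectively. The $\log\log X$ saving appears either as the residue $\log(8d/\mathcal{M})$ at $w=0$ (first range) or through the pointwise estimate $|((8d)^{w}-\mathcal{M}^{w})/w^{2}|\ll(\log X)(\log\log X)$ on $\Re w=1/\log X$ (middle range). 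The second-moment input is Proposition~\ref{mainpropo} rather than Lemma~\ref{coprimelargesieve}; the latter carries an unneeded coprimality condition, though it gives the same conclusion with $\ell=1$.

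Your interpolation device packages the three ranges into a single $\theta$-integral and is arguably cleaner. When you flesh it out, two points need care. First, $Y_{\theta}$ depends on $d$, but only through the bounded factor $(|8d|/X)^{\theta}$, so after Mellin in $u$ this dependence sits inside a quantity of modulus $O(1)$ on $\Re u=1/\log X$ and causes no loss. Second, the simple pole of $\widetilde{W}$ at $u=0$ means you must work on $\Re u=1/\log X$, where $\int|F(u)/u|\,|du|\ll\log\log X$ (not $\log X$); this is what keeps the count at $(\log X)^{2}(\log\log X)^{4}$. You will still want a separate contour shift (e.g.\ to $\Re u=4$) for the tail $N\gg X$ to recover geometric decay, exactly as the paper does for its $\mathcal{B}_{3}$ piece.
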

	\begin{proposition} \label{aasymptotic}
		We have
		$$\sideset{}{^\star}{\sum}_{(d,2q)=1}  \Big \vert \mathcal{A}(1/2, f \otimes \chi_{8d})\Big \vert^{2} \, J\left(\frac{8d}{X}\right)=C_{f} \, \check{J}(0) \, X  \, (\log X)^{3} + O\left(X\, (\log X)^{2}\right),$$
		where $C_{f}$ is a constant  depending  on $f$.
	\end{proposition}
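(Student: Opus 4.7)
The plan is to expand $|\mathcal{A}(1/2, f\otimes\chi_{8d})|^{2}$ and to evaluate the resulting character sum over $d$ by Poisson summation. Since $i^{k}\eta\chi_{8d}(q)\in\{\pm 1\}$, we have
$$|\mathcal{A}(8d)|^{2} = (2 - 2\,i^{k}\eta\chi_{8d}(q))\,\bigg|\sum_{n\geq 1}\frac{\lambda_{f}(n)\chi_{8d}(n)}{n^{1/2}}\,W\!\left(\frac{n}{\mathcal{M}}\right)\bigg|^{2}.$$
Opening the square gives a triple sum in $m,n,d$. I would detect the squarefree condition on $d$ by the identity $\mathbf{1}_{\mathrm{sqfree}}(d)=\sum_{a^{2}\mid d}\mu(a)$, with $a$ truncated at a small power of $\log X$, and then apply Lemma \ref{Poisson} to the inner sum over $d$. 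This transforms it into a dual $k$-sum weighted by $G_{k}(mn)/mn$ for the diagonal piece and by $G_{k}(qmn)/(qmn)$ for the piece twisted by $\chi_{8d}(q)$.

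The $k=0$ contribution of Lemma \ref{Poisson} forces $mn$ (respectively $qmn$) to be a perfect square and supplies the local factor $\prod_{p\mid mn}(1-p^{-1})$. Inserting the Mellin--Barnes representation of $W$ recasts this diagonal as a double contour integral whose arithmetic core is a Dirichlet series of the shape $\sum_{mn=\square}\lambda_{f}(m)\lambda_{f}(n)\,m^{-1/2-u}n^{-1/2-v}$, which by a standard Euler-product computation factors as three zeta-type factors, essentially $\zeta(1+2u)\zeta(1+2v)\zeta(1+u+v)$, times a holomorphic piece coming from the symmetric square $L$-function of $f$. Shifting the $u$ and $v$ contours past the origin and collecting residues, the triple pole of the zeta product combined with the double poles $u^{-2}, v^{-2}$ from the two copies of $W$ produces a polynomial of degree three in $\log\mathcal{M}$. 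Since $\mathcal{M}=X/(\log X)^{1000}$, replacing $\log\mathcal{M}$ by $\log X$ costs only $O(X(\log X)^{2})$, and the leading coefficient assembles, together with the factor of $X$ from the Poisson main term, into $C_{f}\,\check{J}(0)\,X(\log X)^{3}$.

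For the off-diagonal $k\neq 0$ contribution, the weight $\check{J}(Xk/(2a^{2}mn))$ decays rapidly once $|k|$ exceeds $a^{2}mn/X$, and Lemma \ref{Gausslemma} reduces $G_{k}(mn)$ to a Kronecker-symbol factor times a smaller multiplicative piece. After a dyadic decomposition in $m$, $n$ and $k$, I would apply Cauchy--Schwarz to isolate the long twisted Dirichlet polynomial and then invoke the large-sieve bounds of Proposition \ref{mainpropo} and Lemma \ref{coprimelargesieve}. The main obstacle is that $\mathcal{M}$ is only a polylogarithm smaller than $X$, so the Dirichlet-polynomial length and the family length are essentially the same and the large sieve is very close to trivial; squeezing the off-diagonal into the admissible error $O(X(\log X)^{2})$ will require careful bookkeeping of the dyadic scales, the $\mu(a)$-sum from the squarefree sieve, and the oscillation of $\check{J}$ in the large-$|k|$ regime.
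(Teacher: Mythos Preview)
Your overall skeleton matches the paper's: M\"obius sieve for the squarefree condition (truncated at $Y=(\log X)^{200}$), Poisson summation in $d$ via Lemma~\ref{Poisson}, the $k=0$ term giving the main contribution, and the $k\neq 0$ terms forming the error $\mathcal{R}$. The paper handles the large-$a$ tail with Lemma~\ref{coprimelargesieve}, and the main term by citing Petrow's contour computation; your description of the main term is adequate (though the polar structure is really a single $\zeta(1+u+v)$-type pole from the arithmetic series combined with the double poles $u^{-2},v^{-2}$ from the two copies of $W$, not three separate zeta factors).

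The genuine gap is in your off-diagonal plan. You propose to control the $k\neq 0$ terms with Proposition~\ref{mainpropo} and Lemma~\ref{coprimelargesieve}, and you anticipate that ``careful bookkeeping'' will be enough because $\mathcal{M}$ is only polylogarithmically smaller than $X$. It is not enough. After Poisson the dual variable $\ell_{1}$ runs over a range of size $\asymp N_{1}N_{2}a^{2}/X$, while the Dirichlet polynomials in $n_{1},n_{2}$ have length $N_{i}\lesssim\mathcal{M}$. Proposition~\ref{mainpropo} carries an $N\log(2+N/M)$ term, and feeding this through the contour set-up gives an extra contribution to $\mathcal{R}$ of order $X^{3/2+\delta}$ (up to logs) no matter where you place $\Re(s)$; a power of $X$ cannot be recovered by bookkeeping. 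Lemma~\ref{coprimelargesieve} is a statement about averages over $d$ and plays no role after Poisson.

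What the paper actually does for $\mathcal{R}$ is the following chain, all taken from Li: first Lemma~\ref{dirichlet} factors the generating series $Z(\alpha,\beta,\gamma;\ell_{1},a)$ as $L(1/2+\alpha,f\otimes\chi_{m(\ell_{1})})L(1/2+\beta,f\otimes\chi_{m(\ell_{1})})$ times an Euler product $Y(\alpha,\beta,\gamma;\ell_{1})$; then Lemma~\ref{Lilemma} controls the partial sums of $Y$; and finally the $\ell_{1}$-average of the two $L$-polynomials is bounded by \eqref{modifiedlarge} (Li's Lemma~5.3), which gives
\[
\sideset{}{^\star}{\sum}_{M\le|m|\le 2M}\Big|\sum_{n}\frac{\lambda_{f}(n)\chi_{m}(n)}{n^{1/2+it}}G\!\left(\frac{n}{N}\right)\Big|^{2}\ll_{f}(1+|t|)^{3}M\log(2+|t|),
\]
with \emph{no} $N$-term. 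This is the decisive input: it yields $|T(N_{1},N_{2};\alpha,it_{1},it_{2})|\ll\sqrt{N_{1}N_{2}}/(\alpha' X)$ (Lemma~\ref{Texpressionbound}), hence $T_{r}(a,Q)\ll a^{2}\mathcal{M}(\log X)^{2}/(2^{r}X)$ and $\mathcal{R}\ll Y\mathcal{M}\ll X/(\log X)^{10}$. Your proposal should replace Proposition~\ref{mainpropo} by \eqref{modifiedlarge} at this step and insert the factorisation Lemma~\ref{dirichlet} before applying Cauchy--Schwarz.
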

	Indeed,  on applying the identity $(a+b)^2=a^2+b^2+2ab$  to $M^\prime_{\text{s}}(2)$ and then   Cauchy-Schwartz inequality followed by the above propositions, we get Theorem \ref{mainth}. 
	In the rest of the paper, we prove Proposition \ref{errorbound} (in  Section \ref{propproofsec1}) and  Proposition \ref{aasymptotic} (in Section \ref{asmptoicsection}).

	\section{Proof of Proposition \ref{errorbound}} \label{propproofsec1}

	Recall that  $\mathcal{B}(1/2, f\otimes \chi_{8d})$ is given by
	$$\left(1-i^{k} \eta \chi_{d}(-q)\right) \, \frac{1}{2 \pi i} \int_{(3)} \frac{(2 \pi/\sqrt{q})^{-w} \, \Gamma\left(\frac{k}{2}+w \right)}{\Gamma\left(\frac{k}{2}\right)} \frac{(8d)^{w}-\mathcal{M}^{w}}{w^{2}} \sum_{n=1}^{\infty} \frac{\lambda_{f}(n) \chi_{8d}(n)}{n^{\frac{1}{2}+w}} \, dw.$$

	By inserting a smooth dyadic partition of unity into the $n$-sum we get 
	$$ |\mathcal{B}(1/2, f\otimes \chi_{8d})| \ll \Big \vert \sum_{N \text{-dyadic}} \int_{(3)} \gamma(w) \frac{(8d)^{w}-\mathcal{M}^{w}}{w^{2}} \sum_{n=1}^{\infty} \frac{\lambda_{f}(n) \chi_{8d}(n)}{n^{\frac{1}{2}+w}} G\left(\frac{n}{N}\right) \, dw \Big \vert,$$
	where $\gamma(w)=\frac{(2 \pi/\sqrt{q})^{-w} \, \Gamma\left({k}/{2}+w \right)}{\Gamma\left({k}/{2}\right)} $ and  $G$ is as given in Lemma \ref{G}   and 	by $N$ dyadic we mean that $N$ takes  the values $2^{H}, H=0,1,\cdots$.
	
	Consider  the $n$-sum
	\begin{align*}
		&\sum_{n=1}^{\infty} \frac{\lambda_{f}(n) \chi_{8d}(n)}{n^{\frac{1}{2}+w}} G\left(\frac{n}{N}\right) \\
		&= \sum_{n=1}^{\infty} \frac{\lambda_{f}(n) \chi_{8d}(n)}{n^{\frac{1}{2}+w}} V\left(\frac{n}{N}\right)G\left(\frac{n}{N}\right),
	\end{align*}
	with $V$ as in Remark \ref{V}. Using the  Mellin inversion we express $G$ as follows:
	$$G(x) = \frac{1}{2 \pi i} \int_{(\epsilon)} \widetilde{G}(s) \, x^{-s} \, ds,$$
	for any $\epsilon>0$, where $\widetilde{G}(s)$ is the Mellin transform of $G(x)$ given by
	$$\widetilde{G}(s)= \int_{0}^{\infty} G(x) \, x^{s-1} dx.$$
	Therefore the $n$-sum can be expressed as 
	$$\frac{1}{2 \pi i} \int_{(\epsilon)} \sum_{n=1}^{\infty} \frac{\lambda_{f}(n) \chi_{8d}(n)}{n^{\frac{1}{2}+s+w}} V\left(\frac{n}{N}\right) \, N^{s}\,\widetilde{G}(s) \, ds.$$
	By the change of  variable $s \mapsto s-w$ we arrive at 
	$$\frac{1}{2 \pi i} \int_{(\epsilon)} \sum_{n=1}^{\infty} \frac{\lambda_{f}(n) \chi_{8d}(n)}{n^{\frac{1}{2}+s}} V\left(\frac{n}{N}\right) \, N^{s-w}\,\widetilde{G}(s-w) \, ds.$$
	Next we split the $N$-sum as follows: 
	$$  \sum_{N \text{-dyadic}} = \sum_{\substack{N  \text{-dyadic} \\ N \leq \mathcal{M}}}+\sum_{\substack{N  \text{-dyadic} \\  \mathcal{M} <N\leq X}}+\sum_{\substack{N \, \text{-dyadic} \\ N >X}}.$$
	Thus we see that $|\mathcal{B}(1/2, f\otimes \chi_{8d})|^{2}$ is bounded by
	$$ \Big \vert \underbrace{ \sum_{\substack{N \, \text{-dyadic} \\ N \leq \mathcal{M}} } \mathcal{S}(N,\chi_{8d}) }_{\mathcal{B}_{1}(1/2, f\otimes \chi_{8d})}\Big \vert^{2} + \Big \vert \underbrace{\sum_{\substack{N\text{-dyadic} \\ \mathcal{M} < N \leq X}} \mathcal{S}(N,\chi_{8d})}_{\mathcal{B}_{2}(1/2, f\otimes \chi_{8d})} \Big \vert^{2}+\Big \vert \underbrace{\sum_{\substack{N \text{-dyadic} \\ N >X}} \mathcal{S}(N,\chi_{8d})}_{\mathcal{B}_{3}(1/2, f\otimes \chi_{8d})} \Big \vert^{2},$$
	where $\mathcal{S}(N,\chi_{8d})$ is given by
	$$ \frac{1}{2 \pi i}\int_{(3)}\gamma(w) \frac{(8d)^{w}-\mathcal{M}^{w}}{w^{2}} \\
	\int_{(\epsilon)} \sum_{n=1}^{\infty} \frac{\lambda_{f}(n) \chi_{8d}(n)}{n^{\frac{1}{2}+s}} V\left(\frac{n}{N}\right) \, N^{s-w}\,\widetilde{G}(s-w) \, ds \, dw.$$
	We next  estimate $$\mathcal{B}_i(X):= \sideset{}{^\star}{\sum}_{(d,2q)=1} \, |\mathcal{B}_{i}(1/2, f\otimes \chi_{8d})|^2 J\left(\frac{8d}{X}\right),$$
	for $i=1,2,3$ separately. Before embarking this job we record  the following estimates.
	$$ |\Gamma(k/2+w) \, (2 \pi/\sqrt{q})^{-w}| \ll (1+|w|)^{-20}, \, |\tilde{G}(s)| \ll (1+|s|)^{-20},$$
	for any $w$ and $s$ having bounded real parts. 
	\subsection{Estimates for  $ \mathcal{B}_{1}(X)$ }
	
		%
		We move line of integrations $\Re(w)=3$ to $\Re(w)=-1$ and $\Re(s)= \epsilon$ to $\Re(s)=0$, then by Cauchy's residue theorem we get that
		\begin{align*}
			&\mathcal{B}_{1}(1/2, f\otimes \chi_{8d}) = \log \left(\frac{8d}{\mathcal{M}}\right) \, \sum_{\substack{N- \text{dyadic} \\ N \leq \mathcal{M}}} \, \int_{(0)} \sum_{n=1}^{\infty} \frac{\lambda_{f}(n) \chi_{8d}(n)}{n^{\frac{1}{2}+s}} V\left(\frac{n}{N}\right) \, N^{s}\,\tilde{G}(s) \, ds \\
			&+  \Biggl\{\sum_{\substack{N- \text{dyadic} \\ N \leq \mathcal{M}}}\int_{(-1)} \frac{(2 \pi/\sqrt{q})^{-w} \, \Gamma\left(\frac{k}{2}+w \right)}{\Gamma\left(\frac{k}{2}\right)} \frac{(\frac{8d}{N})^{w}-(\frac{\mathcal{M}}{N})^{w}}{w^{2}} \\
			&\times \frac{1}{2 \pi i} \int_{(0)} \sum_{n=1}^{\infty} \frac{\lambda_{f}(n) \chi_{8d}(n)}{n^{\frac{1}{2}+s}} V\left(\frac{n}{N}\right) \, N^{s}\,\tilde{G}(s-w) \, ds \, dw \Biggr \}.
		\end{align*}
		Recall that $\mathcal{M}=X/(\log X)^{1000}$ and $X/2 \leq 8d \leq 2X$. Note that the number of  dyadic  $N$ such that $N \leq \mathcal{M}$ is at most $\log X$. We conclude that  $	|\mathcal{B}_{1}(1/2, f\otimes \chi_{8d})|$ is bound above by
		\begin{align*}
			& (\log X) (\log \log X) \sup_{N \leq \mathcal{M}} \int_{-\infty}^{\infty} \Big \vert \sum_{n=1}^{\infty} \frac{\lambda_{f}(n) \chi_{8d}(n)}{n^{\frac{1}{2}+it}} V\left(\frac{n}{N}\right) \Big \vert \, \frac{dt}{(1+|t|)^{20}} \\
			&+ \sum_{\substack{N- \text{dyadic} \\ N \leq \mathcal{M}}} \frac{N}{\mathcal{M}} \int_{-\infty}^{\infty} \int_{-\infty}^{\infty} \Big \vert \sum_{n=1}^{\infty} \frac{\lambda_{f}(n) \chi_{8d}(n)}{n^{\frac{1}{2}+it_{1}}} V\left(\frac{n}{N}\right) \Big \vert \frac{dt_{1}}{(1+|t_{2}|)^{20}} \, \frac{dt_{2}}{(1+|t_{1}-t_{2}|)^{20}}. 
		\end{align*}
		By  Cauchy-Schwartz inequality we get the following bound for $|\mathcal{B}_{1}(1/2, f\otimes \chi_{8d})|^{2} $
		\begin{align*}
			& (\log X)^{2} (\log \log X)^{2} \sup_{N \leq \mathcal{M}} \int_{-\infty}^{\infty} \Big \vert \sum_{n=1}^{\infty} \frac{\lambda_{f}(n) \chi_{8d}(n)}{n^{\frac{1}{2}+it}} V\left(\frac{n}{N}\right) \Big \vert^{2} \, \frac{dt}{(1+|t|)^{20}} \\
			&+ \sum_{\substack{N- \text{dyadic} \\ N \leq \mathcal{M}}} \frac{N}{\mathcal{M}} \int_{-\infty}^{\infty} \int_{-\infty}^{\infty} \Big \vert \sum_{n=1}^{\infty} \frac{\lambda_{f}(n) \chi_{8d}(n)}{n^{\frac{1}{2}+it_{1}}} V\left(\frac{n}{N}\right) \Big \vert^{2} \frac{dt_{1}}{(1+|t_{2}|)^{20}} \, \frac{dt_{2}}{(1+|t_{1}-t_{2}|)^{20}},
		\end{align*}
		where we have used $\sum_{\substack{N -\text{dyadic} \\ N \leq \mathcal{M}}}N/\mathcal{M} \ll 1$. Thus on applying  Proposition \ref{mainpropo} we see that   $\sideset{}{^\star}{\sum_d} \, |\mathcal{B}_{1}(1/2, f\otimes \chi_{8d})|^2 J\left(\frac{8d}{X}\right)$ is dominated by
		\[(\log X \log \log X)^{2} \sup_{N \leq \mathcal{M}} \left(X+N \log(2+\frac{N}{X})\right) + \sum_{\substack{N\, \text{dyadic} \\ N \leq \mathcal{M}}} \frac{N}{\mathcal{M}} \left(X+N \log(2+\frac{N}{X})\right)\]
		which is further bounded by
		$ X \, (\log X)^{2} (\log \log X)^{2} .$
		Hence we conclude that
		$$\sideset{}{^\star}{\sum}_{d} \, |\mathcal{B}_{1}(1/2, f\otimes \chi_{8d})|^2 J\left(\frac{8d}{X}\right) \ll X \, (\log X)^{2} (\log \log X)^{2}.$$ 
		\subsection{Estimates  for  $ \mathcal{B}_2(X)$}
		We move line of integrations $\Re(w)=3$ to $\Re(w)=1/\log X$ and $\Re(s)= \epsilon$ to $\Re(s)=0$. Then we see that $|\mathcal{B}_{2}(1/2, f\otimes \chi_{8d})| $ is at most 
		
		\begin{align*}
			\ll &\Big \vert \sum_{\substack{N- \text{dyadic} \\ \mathcal{M} < N \leq X}} \int_{(1/\log X)} \int_{(0)} \, \frac{(2 \pi/\sqrt{q})^{-w} \, \Gamma\left(\frac{k}{2}+w \right)}{\Gamma\left(\frac{k}{2}\right)} \frac{(\frac{8d}{N})^{w}-(\frac{\mathcal{M}}{N})^{w}}{w^{2}} \\
			& \times  \sum_{n=1}^{\infty} \frac{\lambda_{f}(n) \chi_{8d}(n)}{n^{\frac{1}{2}+s}} V\left(\frac{n}{N}\right) \, N^{s}\, \tilde{G}(s-w) \, ds \, dw \Big \vert. 
		\end{align*}
		We write the $w$-integral  as follows
		\begin{align*}
& \left(\frac{\sqrt{q}\mathcal{M}}{ 2 \pi N}\right)^{\frac{1}{\log X}}	\int_{-\infty}^{\infty} \frac{ \, \left(\frac{\sqrt{q}}{2\pi}\right)^{it_2}\Gamma\left(\frac{k}{2}+\frac{1}{\log X}+it_2 \right)}{\Gamma\left(\frac{k}{2}\right) (1/\log X+it_2)} h(t_2, X)	\tilde{G}(s-1/\log X+it_2)  dt_2,
		\end{align*}
Where
\[h(t_2, X):=\frac{({8d}/{\mathcal{M}})^{1/\log X+it_2}-1 }{(1/\log X+it_2)} \ll \log \log X. \]
We split  the above  integral as follows
\[ \int_{-\infty}^{\infty}= \int_{-1}^1+ \int_{-\infty  }^{-1}+ \int_1^{\infty  }.\]
Note that the contribution of the first part is bounded by $\log \log X$.  Thus we see that $	|\mathcal{B}_{2}(1/2, f\otimes \chi_{8d})|$ is bounded by
		\begin{align*}
		 (\log \log X)^2  \sum_{\substack{N \, \text{dyadic} \\ \mathcal{M} < N \leq X}}  \int_{1}^{\infty} \int_{\mathbb{R}}
			 \Big \vert \sum_{n=1}^{\infty} \frac{\lambda_{f}(n) \chi_{8d}(n)}{n^{\frac{1}{2}+it_{1}}} V\left(\frac{n}{N}\right) \Big \vert 
			  \frac{dt_{1}}{(1+|t_{2}|)^{20}}  \frac{dt_{2}}{(1+|t_{1}-t_{2}|)^{20}}.  
		\end{align*}
		We apply Cauchy-Schwartz inequality to get
		\begin{align*}
			|\mathcal{B}_{2}(1/2, f\otimes \chi_{8d})|^{2} \ll & (\log \log X)^{4} \, \sum_{\substack{N- \text{dyadic} \\ \mathcal{M} < N \leq X}} \, \int_{-\infty}^{\infty} \int_{-\infty}^{\infty} \\
			&\times \Big \vert \sum_{n=1}^{\infty} \frac{\lambda_{f}(n) \chi_{8d}(n)}{n^{\frac{1}{2}+it_{1}}} V\left(\frac{n}{N}\right) \Big \vert^{2} \, \frac{dt_{1}}{(1+|t_{2}|)^{20}} \, \frac{dt_{2}}{(1+|t_{1}-t_{2}|)^{20}}.
		\end{align*}
	 Now we  apply  Proposition \ref{mainpropo} and   $$\sum_{\substack{N -\text{dyadic} \\ \mathcal{M} < N \leq X}} 1 \ll \log (X/\mathcal{M}) \ll \log \log X$$  to conclude that 
		\begin{align*}
			&\sideset{}{^\star}{\sum}_{d} \, |\mathcal{B}_{2}(1/2, f\otimes \chi_{8d})|^2 J\left(\frac{8d}{X}\right) \\
			&  \ll  (\log \log X)^{4} \sum_{\substack{N- \text{dyadic} \\ \mathcal{M} < N \leq X}} \left( X+N \log\left(2+\frac{N}{X}\right)\right) \\
			&\ll X  (\log \log X)^{5}.
		\end{align*}
		\subsection{Estimates  for  $ \mathcal{B}_3(X)$}
		
		We move line of integrations $\Re(w)=3$ to $\Re(w)= 4$ and $\Re(s)= \epsilon$ to $\Re(s)=0$. Thus  we get 
		\begin{align*}
			|\mathcal{B}_{3}(1/2, f\otimes \chi_{8d})|^{2} \ll & \sum_{\substack{N- \text{dyadic} \\  N > X}} \left(\frac{X}{N}\right)^{4} \, \int_{-\infty}^{\infty} \int_{-\infty}^{\infty} \\
			&\times \Big \vert \sum_{n=1}^{\infty} \frac{\lambda_{f}(n) \chi_{8d}(n)}{n^{\frac{1}{2}+it_{1}}} V\left(\frac{n}{N}\right) \Big \vert^{2} \, \frac{dt_{1}}{(1+|t_{2}|)^{20}} \, \frac{dt_{2}}{(1+|t_{1}-t_{2}|)^{20}},  
		\end{align*}
		where we have used that $\sum_{\substack{N\text{-dyadic} \\  N > X}}\left(\frac{X}{N}\right)^{4} \ll 1 $. Thus we see that  
		\begin{align*}
			&\mathcal{B}_3(X)\ll \sum_{\substack{N- \text{dyadic} \\  N > X}}\left(\frac{X}{N}\right)^{4} \left(X+N \log\left(2+ \frac{N}{X}\right)\right) \ll X. 
		\end{align*}
		For   the second inequality we appeal to the inequality 
		$$\frac{X}{N} \log\left(2+\frac{N}{X}\right) \ll 1 $$ for $N>X$.  We conclude that
		$$\sideset{}{^\star}{\sum}_{d} \, |\mathcal{B}_{3}(1/2, f\otimes \chi_{8d})|^2 J\left(\frac{8d}{X}\right) \ll X.$$
		Thus we conclude the proof of  Proposition \ref{errorbound}.
		\section { Proof of Proposition \ref{aasymptotic} } \label{asmptoicsection}
		We are seeking an asymptotic formula for
		$$\sideset{}{^\star}{\sum}_{(d,2q)=1}  \left(\mathcal{A}(1/2, f \otimes \chi_{8d})\right)^{2} \, J\left(\frac{8d}{X}\right)=\sum_{(d,2q)=1}  \left(\mathcal{A}(1/2, f \otimes \chi_{8d})\right)^{2} \, J\left(\frac{8d}{X}\right) \sum_{a^{2}\mid d} \mu(a)$$
		Interchanging the order the summations, we  get the following expression
		\begin{align} \label{aasymptoitc}
		& 2 \left(\sum_{\substack{a \leq Y \\ (a,2q)=1}}+\sum_{\substack{a > Y \\ (a,2q)=1}}\right) \mu(a) \sum_{(d,2q)=1} J\left(\frac{8da^{2}}{X}\right) \\
		&	\times \left(1-i^{k} \eta \chi_{8d}(q)\right)\Bigg| \sum_{\substack{n=1 \\ (n,a)=1}}^{\infty} \frac{\lambda_{f}(n) \chi_{8d}(n)}{n^{1/2}} \, W\left( \frac{n}{\mathcal{M}}\right)
			\Bigg|^{2}, \notag 
		\end{align}
		where $Y=\log^{200}X$. 
		\subsection{Contribution of small $a$, $a\leq Y$}
		Let us denote the contribution of $a \leq Y$ to  \eqref{aasymptoitc} by $\mathcal{D(\leq Y)}$ which   is given by
		\begin{align*}
			2 \sum_{\substack{a \leq Y \\ (a,2q)=1}} \mu(a) \mathop{\sum \sum}_{\substack{n_{1},n_{2} \\ (n_{1}n_{2},2a)=1}} \frac{\lambda_{f}(n_{1}) \lambda_{f}(n_{2})}{\sqrt{n_{1}n_{2}}} \, W\left(\frac{n_{1}}{\mathcal{M}}\right) W\left(\frac{n_{2}}{\mathcal{M}}\right) \left(\mathcal{S}_{q^2}(\cdots )-i^{k} \eta \, \mathcal{S}_{q}(\cdots )\right),
		\end{align*}
		where
		\begin{equation} \label{d-sum}
			\mathcal{S}_{Q}(\cdots ):=	\mathcal{S}_{Q}(a;n_{1}n_{1}) = \sum_{(d,2)=1} \, \chi_{8d}(n_{1}n_{2}Q) \, J\left(\frac{8da^{2}}{X}\right)
		\end{equation}
		for $Q\in \{q,q^2\}$.  Using Lemma \ref{Poisson} we get 
		\begin{align*}
			\mathcal{S}_{Q}(a;n_{1}n_{1}) =\delta_{\square}(n_{1}n_{2}Q) \, \frac{X}{16a^{2}} \, \check{J} (0) \, \prod_{p \mid n_{1}n_{2}Q} \left(1-\frac{1}{p}\right) \\
			+ \frac{X}{16a^{2}} \sum_{ k \neq 0} (-1)^{k} \frac{G_{k}(n_{1}n_{2}Q)}{n_{1}n_{2}Q} \, \check{J}\left(\frac{X k}{16a^{2}n_{1}n_{2}Q}\right).
		\end{align*}
		Therefore we have $\mathcal{D(\leq Y)} = M + \mathcal{R}$, where  
		\begin{align*}
			M=&\frac{X \check{J}(0)}{8} \sum_{Q \in \{q,q^2\}} \, \varepsilon_{Q}  \sum_{\substack{a\leq Y \\ (a,2q)=1}} \frac{\mu(a)}{a^{2}} \mathop{\sum \sum}_{\substack{n_{1},n_{2} \\ n_{1}n_{2}Q=\square \\  (n_{1}n_{2},2a)=1}} \frac{\lambda_{f}(n_{1}\lambda_{f}(n_{2}))}{\sqrt{n_{1}n_{2}}}  \\
			& \times \prod_{p \mid n_{1}n_{2} Q}\left(1-\frac{1}{p}\right) \, W\left(\frac{n_{1}}{\mathcal{M}}\right) W\left(\frac{n_{2}}{\mathcal{M}}\right),
		\end{align*}
		and
		\begin{align*}
			\mathcal{R} =\frac{X }{8} \sum_{Q \in \{q,q^2\}} \, \varepsilon_{Q}  \sum_{\substack{a\leq Y \\ (a,2q)=1}} \frac{\mu(a)}{a^{2}}    \, T(a,Q)
		\end{align*}
		with 
		\begin{align*}
			T(a,Q)=&\sum_{\ell \neq 0} (-1)^{-\ell}  \mathop{\sum \sum}_{\substack{n_{1},n_{2} \\ (n_{1}n_{2},2a)=1}} \frac{\lambda_{f}(n_{1}) \lambda_{f}(n_{2})}{\sqrt{n_{1}n_{2}}}  \, \frac{G_{\ell}(n_{1}n_{2}Q)}{n_{1}n_{2}Q}  \\
			& \times  W\left(\frac{n_{1}}{\mathcal{M}}\right) W\left(\frac{n_{2}}{\mathcal{M}}\right)  \, \check{J}\left(\frac{X \ell}{16a^{2}n_{1}n_{2}Q}\right),
		\end{align*}
		where $\varepsilon_{q^{2}}=1$ and $\varepsilon_{q}=-i^{k} \eta $. Let
		$$T(a,Q)=-\left(T_{0}(a,Q)-\sum_{r \geq 1}T_{r}(a,Q) \right) $$
		where, for  $r\geq 0$, $	T_{r}(a,Q)$ is defined as 
		\begin{align*}
			& \sum_{\substack{\ell \neq 0 \\ \ell =2^{r} \ell^{\prime} \\ (2,\ell^{\prime})=1}} \mathop{\sum \sum}_{\substack{n_{1},n_{2} \\ (n_{1}n_{2},2a)=1}} \frac{\lambda_{f}(n_{1}) \lambda_{f}(n_{2})}{\sqrt{n_{1}n_{2}}}  \, \frac{G_{\ell}(n_{1}n_{2}Q)}{n_{1}n_{2}Q}  W\left(\frac{n_{1}}{\mathcal{M}}\right) W\left(\frac{n_{2}}{\mathcal{M}}\right)  \, \check{J}\left(\frac{X \ell}{16a^{2}n_{1}n_{2}Q}\right) \\
			=&\mathop{\sum \sum}_{\substack{n_{1},n_{2} \\ (n_{1}n_{2},2a)=1}}\underbrace{\sum_{\substack{(\ell,2)=1}}  \frac{\lambda_{f}(n_{1}) \lambda_{f}(n_{2})}{\sqrt{n_{1}n_{2}}}  \frac{G_{2^{\delta(r)} \ell}(n_{1}n_{2}Q)}{n_{1}n_{2}Q}   \check{J}\left(\frac{X 2^{r} \ell}{16a^{2}n_{1}n_{2}Q}\right) }_{\mathbb{T}(n_{1},n_{2})} W\left(\frac{n_{1}}{\mathcal{M}}\right) W\left(\frac{n_{2}}{\mathcal{M}}\right)
		\end{align*}
		where $\delta(r)=0$ if $2 \mid r $ and $\delta(r)=1$ if $2 \nmid r$. For the second equality we use the fact that $G_{4k}(n)=G_{k}(n)$ for odd $n$.
		
		\begin{lemma} \label{Trvalue}
			We have
			\begin{align*}
				&T_{r}(a,Q) \ll \, (\log X)^{2} \,\mathop{\sum \sum}_{N_{1},\,  N_{2}\text{-dyadic}} \left(1+\frac{N_{1}}{\mathcal{M}}\right)^{-6} \, \left(1+\frac{N_{2}}{\mathcal{M}}\right)^{-6} I(N_1,N_2),
			\end{align*}
			where $I(N_1,N_2)$ is given by 
			$$\mathop{\int \int \int \int } \Big \vert T(...) \Big \vert  \frac{dt_{1}}{\left(1+|t_{1}-t_{3}|\right)^{100}} \frac{dt_{2}}{\left(1+|t_{2}-t_{4}|\right)^{100}} \frac{dt_{3}}{\left(1+|t_{3}|\right)^{100}}\frac{dt_{4}}{\left(1+|t_{4}|\right)^{100}},$$
			and  $T(...)=T(N_{1},N_{2}; 2^{r} /16 a^{2},it_{1},it_{2})$ is  defined as in \eqref{Tvalue}.
		\end{lemma}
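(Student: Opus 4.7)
The plan is to dyadically decompose the $n_1,n_2$ summations and then use Mellin inversion to isolate the inner object $T(N_1,N_2;2^r/(16a^2);it_1,it_2)$ of \eqref{Tvalue}, so that the Mellin variables supply precisely the four rapidly decaying integration kernels in $I(N_1,N_2)$.

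First I would insert the smooth partition of unity $1=\sum_{N_i\text{-dyadic}}G(n_i/N_i)$ from Remark \ref{V} into each of the $n_1,n_2$ sums defining $T_r(a,Q)$, and then exploit the identity $V(x)G(x)=G(x)$ to also multiply by a smoother bump $V(n_i/N_i)$ that is unity on the support of $G(\cdot/N_i)$. This echoes the opening step of the proof of Proposition \ref{errorbound}.

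Next, I would Mellin-invert each $G(n_i/N_i)$ on a vertical line $\Re(s_i')=\epsilon$ and simultaneously use the defining Mellin representation of $W(n_i/\mathcal{M})$ on $\Re(u_i)=3$. After the change of variable $s_i'\mapsto s_i-u_i$, the $n_i$-dependence collapses to $n_i^{-1/2-s_i}V(n_i/N_i)$ and the remaining sum over $\ell$ and $n_1,n_2$ becomes exactly $T(N_1,N_2;2^r/(16a^2);s_1,s_2)$. The outer weight is then, up to constants,
\[
\frac{\Gamma(u_1+k/2)\Gamma(u_2+k/2)\,\widetilde{G}(s_1-u_1)\widetilde{G}(s_2-u_2)}{\Gamma(k/2)^2\,u_1^2\,u_2^2}\,N_1^{s_1}\left(\tfrac{\mathcal{M}}{N_1}\right)^{u_1}N_2^{s_2}\left(\tfrac{\mathcal{M}}{N_2}\right)^{u_2}.
\]

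To harvest the decay factor $(1+N_i/\mathcal{M})^{-6}$, I would shift each $u_i$-contour to $\Re(u_i)=6$ when $N_i>\mathcal{M}$ (without crossing any poles), which yields $(\mathcal{M}/N_i)^6$ and hence $(N_i/\mathcal{M})^{-6}$. When $N_i\le\mathcal{M}$, I would instead shift $\Re(u_i)$ to $1/\log X$, just past the double pole $u_i=0$; crossing this pole of $u_i^{-2}$ contributes a residue whose size is controlled by $\log(\mathcal{M}/N_i)\le\log X$ (alternatively, staying on the line $\Re(u_i)=1/\log X$ yields $(\log X)^2$ from the factor $u_i^{-2}$ itself). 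Either way, the total cost in the dyadic regime $N_i\le\mathcal{M}$ is $(\log X)^2$. Writing $u_i=\sigma_i+it_{i+2}$ and $s_i=\epsilon+it_i$, Stirling's formula applied to $\Gamma(u_i+k/2)$ yields the kernel $(1+|t_{i+2}|)^{-100}$, and the Schwartz decay of $\widetilde{G}(s_i-u_i)$ supplies $(1+|t_i-t_{i+2}|)^{-100}$. Taking absolute values reproduces $I(N_1,N_2)$ with $|T(\dots)|$ inside the integrand.

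The main obstacle is the bookkeeping of the contour manipulations near the double pole $u_i=0$: in particular, ensuring uniformity of the bound $(1+N_i/\mathcal{M})^{-6}$ across the transition regime $N_i\sim\mathcal{M}$ and confirming that the residues or near-pole contributions amount to no more than $(\log X)^2$ overall. This is routine but requires careful tracking of the poles of $u_i^{-2}$, $\widetilde{G}(s_i-u_i)$, and $\Gamma(u_i+k/2)$ simultaneously.
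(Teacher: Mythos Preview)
Your proposal is essentially correct and follows the paper's approach step for step: dyadic partition in $n_1,n_2$, insertion of the redundant $V$-bump, Mellin inversion of both $G$ and $W$, the change of variables to isolate $T(N_1,N_2;2^r/16a^2;\cdot,\cdot)$, and contour shifts in the $W$-variables calibrated according to whether $N_i\gtrless\mathcal{M}$.

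The only point of divergence is your handling of the case $N_i\le\mathcal{M}$. The paper shifts the $w_i$-contour all the way to $\Re(w_i)=-6$, crossing the double pole at $w_i=0$; the residue there is
\[
H'(0)\tilde{G}(u)-H(0)\tilde{G}'(u)+H(0)\tilde{G}(u)\log(\mathcal{M}/N_i),
\]
which contributes one factor of $\log X$ per variable, giving $(\log X)^2$ in total, while the shifted integral at $\Re=-6$ is $O(1)$. Your alternative of parking at $\Re(u_i)=1/\log X$ also works, but note that moving from $\Re=3$ to $\Re=1/\log X>0$ does \emph{not} cross the pole, so there is no residue in that variant; the $(\log X)^2$ you quote then comes from $\sup|u_i|^{-2}$, and taken pointwise for both variables this would naively give $(\log X)^4$. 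To recover $(\log X)^2$ you must either integrate $|u_i|^{-2}$ against the Gamma kernel first (which yields $\log X$ per variable) or, as the paper does, actually cross the pole to a negative abscissa and use the residue. Either route lands on the stated bound; just tidy up the bookkeeping so that each variable contributes a single $\log X$.
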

		
		\begin{proof}
			We use  the dyadic partition of unity to see that 
		\begin{align*}
		T_{r}(a,Q)&= \mathop{\sum \sum}_{N_{1}, N_{2} \text{-dyadic}} \mathop{\sum \sum}_{\substack{n_{1},n_{2} \\ (n_{1}n_{2},2a)=1}} \mathbb{T}(n_{1},n_{2})  \, W\left(\frac{n_{1}}{\mathcal{M}}\right) W\left(\frac{n_{2}}{\mathcal{M}}\right)  \\
		&  \times V\left(\frac{n_{1}}{N_{1}}\right) \, V\left(\frac{n_{2}}{N_{2}}\right) \,G\left(\frac{n_{1}}{N_{1}}\right) \, G\left(\frac{n_{2}}{N_{2}}\right),
	\end{align*}
			which can be further expressed as 
			\begin{align*}
				\mathop{\sum \sum}_{N_{1}, N_{2} - \, \text{dyadic}} &\frac{1}{(2 \pi i)^2}\int_{(\sigma_{1})}  \int_{(\sigma_{2})}  H(w_{1}) H(w_{2}) \frac{\mathcal{M}^{w_{1}}}{w_{1}^{2}} \frac{\mathcal{M}^{w_{2}}}{w_{2}^{2}}  \\
				&\times  \mathop{\sum \sum}_{\substack{n_{1},n_{2} \\ (n_{1}n_{2},2a)=1}} \frac{\mathbb{T}(n_{1},n_{2})  }{n_{1}^{w_{1}}n_{2}^{w_{2}}} \,V\left(\frac{n_{1}}{N_{1}}\right) \, V\left(\frac{n_{2}}{N_{2}}\right)\, G\left(\frac{n_{1}}{N_{1}}\right) \, G\left(\frac{n_{2}}{N_{2}}\right) dw_{1} dw_{2},
			\end{align*}
			where 
			$$H(w) = \frac{\Gamma\left(w+\frac{k}{2}\right) (2/\sqrt{q})^{-w}}{\Gamma(k/2)},$$
			and $\sigma_{1}, \sigma_{2} >0$. On applying the  Mellin inversion to  the above expression, we arrive at 
			\begin{align*}
				\mathop{\sum \sum}_{N_{1}, N_{2} - \, \text{dyadic}} &\frac{1}{(2 \pi i)^4}\int_{(\sigma_{1})}  \int_{(\sigma_{2})} \int_{(2\epsilon)} \int_{(2\epsilon)} H(w_{1}) H(w_{2}) \tilde{G}(u) \, \tilde{G}(v) \frac{\mathcal{M}^{w_{1}}}{w_{1}^{2}} \frac{\mathcal{M}^{w_{2}}}{w_{2}^{2}} \, N_{1}^{u}N_{2}^{v} \\
				& \times \mathop{\sum \sum}_{\substack{n_{1},n_{2} \\ (n_{1}n_{2},2a)=1}} \frac{\mathbb{T}(n_{1},n_{2})  }{n_{1}^{w_{1}+u}n_{2}^{w_{2}+v}}  \, \, du \, dv \, dw_{1} \, dw_{2} .
			\end{align*}
			We now make  the change of variables $u \to u-w_{1}$ and $v \to v-w_{2}$ in the  $u, v$-integrals to arrive at
			\begin{align*}
				\mathop{\sum \sum}_{N_{1}, N_{2} - \, \text{dyadic}} &\frac{1}{(2 \pi i)^4}\int_{(\sigma_{1})}  \int_{(\sigma_{2})} \int_{(2\epsilon)} \int_{(2\epsilon)} H(w_{1}) H(w_{2}) \tilde{G}(u-w_{1}) \, \tilde{G}(v-w_{2})  \\
				& \times 	\frac{(\mathcal{M} /N_{1})^{w_{1}}}{w_{1}^{2}} \frac{(\mathcal{M} /N_{2})^{w_{2}}}{w_{2}^{2}}  
				T(N_{1},N_{2}; 2^{r} /16 a^{2},u,v) \, N_{1}^{u}N_{2}^{v} \,    dw_{1} \, dw_{2}    \, du \, dv \,.
			\end{align*}
			where $	T(N_{1},N_{2}; \alpha,u,v)$ is given by 
			\begin{equation} \label{Tvalue}
				\sum_{\substack{(\ell,2)=1}}  \, \mathop{\sum \sum}_{\substack{n_{1},n_{2} \\ (n_{1}n_{2},2a)=1}}   \frac{\lambda_{f}(n_{1}) \lambda_{f}(n_{2})}{n_{1}^{1/2+u} n_{2}^{1/2+v}}  \, \frac{G_{2^{\delta(r)} \ell}(n_{1}n_{2}Q)}{n_{1}n_{2}Q}   \, \check{J}\left(\frac{\ell X \alpha}{n_{1}n_{2}Q}\right) V\left(\frac{n_{1}}{N_{1}}\right)  V\left(\frac{n_{2}}{N_{2}}\right).
			\end{equation}
			We now shift the contours in the $w_{1}$ and $w_{2}$ integrals. If $N_{1}>\mathcal{M}$ and $N_{2}>\mathcal{M}$, we move the contours to $\Re(\sigma_{1})=6$ and $\Re(\sigma_{2})=6$. If $N_{1} \leq \mathcal{M}$ and $N_{2} >\mathcal{M}$, we move the contours to $\Re(\sigma_{1})=-6$ and $\Re(\sigma_{2})=6$, and  in this case we get  a double pole at $w_{1}=0$  with the residue 
			$$H^{\prime}(0)\tilde{G}(u)-H(0) \tilde{G}^{\prime}(u)+H(0)\tilde{G}(u) \, \log \left(\frac{\mathcal{M}}{N_{1}}\right).$$
			The case $N_{1} >\mathcal{M}$ and $N_{2} \leq \mathcal{M}$ is similar.   In the remaining  case, i.e.,  $N_{1} \leq \mathcal{M}$ and $N_{2} \leq \mathcal{M}$ we move both the  contours to $\Re(\sigma_{1})=-6$ and $\Re(\sigma_{2})=-6$ and encounter  double poles at $w_{1}=0$ and $w_{0}=0$. Finally we move both the $u,v$ integrals to $\Re(u)=0$ and $\Re(v)=0$ and thus we get the lemma by estimating the $u,v$-integrals trivially. 
			%
		\end{proof}
		
		\subsection{Estimation of $T(N_{1},N_{2};\alpha,it_{1},it_{2})$}Set $\alpha^{\prime}= \alpha/2$ if $\delta(r)=1$ and $\alpha^{\prime}=\alpha$ if $\delta(r)=0$.  
		Recall that $T(N_{1},N_{2};\alpha^{\prime},it_{1},it_{2}) $ is given by
		$$\sum_{\substack{(\ell,2)=1}}  \, \mathop{\sum \sum}_{\substack{n_{1},n_{2} \\ (n_{1}n_{2},2a)=1}}   \frac{\lambda_{f}(n_{1}) \lambda_{f}(n_{2})}{n_{1}^{1/2+it_{1}} n_{2}^{1/2+it_{2}}}  \, \frac{G_{2^{\delta(r)} \ell}(n_{1}n_{2}Q)}{n_{1}n_{2}Q}   \, \check{J}\left(\frac{\ell X \alpha^{\prime}}{n_{1}n_{2}Q}\right) \, V\left(\frac{n_{1}}{N_{1}}\right) \, V\left(\frac{n_{2}}{N_{2}}\right).$$
		Using  the   expression  of $\check{J}$ and the Mellin inversion  theorem  in the above expression we arrive at 
		\begin{align} \label{Zintegration}
			&\frac{1}{(2 \pi i)^{3}} \int_{(\epsilon)} \int_{(2 \epsilon)} \int_{(2 \epsilon)} \tilde{V}(u) \tilde{V}(v) \, \tilde{J}(1-s) \Gamma(s) \\
			&\times \sideset{}{^\star}{\sum}_{(\ell_{1},2)} \left(\cos + \frac{\ell_{1}}{|\ell_{1}|} \sin \right)\left(\frac{\pi s}{2}\right) \left(\frac{Q}{|\ell_{1}| X \alpha^{\prime}}\right)^{s} \,  \notag\\
			& \times Z\left(\frac{1}{2}+it_{1}+u-s, \frac{1}{2}+it_{2}+v-s, s;\ell_{1},a\right) N_{1}^{u} \, N_{2}^{v} \, ds \, du \, dv, \notag 
		\end{align}
		with
		$$Z(\alpha,\beta, \gamma;\ell_{1},a)= \sum_{\substack{\ell_{2} \geq 1 \\ (\ell_{2},2)=1}} \mathop{\sum \sum}_{\substack{n_{1},n_{2} \\ (n_{1}n_{2},2a)=1}}   \frac{\lambda_{f}(n_{1}) \lambda_{f}(n_{2})}{n_{1}^{\alpha} n_{2}^{\beta} \ell_{2}^{2\gamma}}  \, \frac{G_{2^{\delta(r)} \ell_{1}\ell_{2}^{2}}(n_{1}n_{2}Q)}{n_{1}n_{2}Q},$$
		where we split the sum over $\ell$ as $\ell = \ell_{1} \ell_{2}^{2}$ with $\ell_{1}$ square free and $\ell_{2} \geq 1$.

		The following lemma is similar to  Lemma 2.5 of  \cite{Li}(with   minor changes).
		\begin{lemma} \label{dirichlet}
			Let $\ell_{1}$ be square free.  Let $m(\ell_{1})=\ell_{1}$ if $\ell_{1} \equiv \, 1 \, \rm mod \, 4$,  and $m(\ell_{1})=4\ell_{1}$ if $\ell_{1} \, \equiv, \, 2, 3 \rm mod\, \,4$. Then we have 
			$$Z(\alpha,\beta, \gamma;\ell_{1},a)= L\left(1/2+\alpha, f\otimes\chi_{m(\ell_{1})}\right) L\left(1/2+\beta, f\otimes\chi_{m(\ell_{1})}\right) \, Y\left(\alpha,\beta,\gamma;\ell_{1}\right),$$
			where $Y\left(\alpha,\beta,\gamma;\ell_{1}\right)$ is given by
			$$ \frac{Z_{2}\left(\alpha, \beta,\gamma;\ell_{1},a\right)}{\zeta(1+\alpha+\beta) L\left(1+2\alpha, \text{sym}^{2}f\right) L\left(1+2\beta, \text{sym}^{2}f\right) L\left(1+\alpha + \beta, \text{sym}^{2}f\right)} ,$$
			Here $Z_{2}(\alpha,\beta, \gamma;\ell_{1},a)$ is analytic in the region $\Re(\alpha),\Re(\beta) \geq -\delta/2$ and $\Re(\gamma) \geq 1/2+\delta$ for any $0<\delta <1/3$. Moreover, in this region we have $ Z_{2}(\alpha,\beta, \gamma;\ell_{1},a) \ll_{f,\delta} \, Q^{-1/2} \, \tau(a Q)$.  We  also  have the following Dirichlet series represention
			$$Y\left(\alpha,\beta,\gamma;\ell_{1}\right)= \mathop{\sum \sum \sum }_{r_{1},r_{2},r_{3}}\frac{C(r_{1},r_{2},r_{3})}{r_{1}^{\alpha}r_{2}^{\beta} r_{3}^{2\gamma}}, $$
			for some coefficients $C(r_{1},r_{2},r_{3})$.
		\end{lemma}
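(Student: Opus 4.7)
The plan is to factorize $Z(\alpha,\beta,\gamma;\ell_1,a)$ as an Euler product over primes $p\nmid 2a$ and then match each local factor against the local factors of the two twisted $L$-functions, the Riemann zeta factor, and the three symmetric-square $L$-functions that appear in the claim. The multiplicativity statement $G_k(mn)=G_k(m)G_k(n)$ for coprime odd $m,n$ in Lemma \ref{Gausslemma}, together with multiplicativity of $\lambda_f$ and the oddness of the $\ell_2$-sum, immediately gives the Euler factorization $Z=\prod_{p\nmid 2a} Z_p$, where at each prime the local factor is a triple sum over the exponents $e_1,e_2,f$ of $p$ in $n_1,n_2,\ell_2$, with the squarefree $\ell_1$ contributing a fixed $p^0$ or $p^1$ to the Gauss-sum argument.

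At a generic prime $p\nmid 2\ell_1 Q$, the explicit cases of Lemma \ref{Gausslemma} collapse the $f$-sum into two contributions (the saturated range $2f\ge e_1+e_2$ and the boundary $2f=e_1+e_2-1$), yielding a local factor that, after applying the Hecke relation $\lambda_f(p^{e_1})\lambda_f(p^{e_2})=\sum_j\lambda_f(p^{e_1+e_2-2j})$, differs from the target local factor
\[
L_p\bigl(\tfrac12+\alpha,f\otimes\chi_{m(\ell_1)}\bigr)\,L_p\bigl(\tfrac12+\beta,f\otimes\chi_{m(\ell_1)}\bigr)\cdot\mathcal{L}_p^{-1}
\]
by a factor $1+O(p^{-1-2\delta})$ in the claimed region, where $\mathcal{L}_p$ denotes the product of $\zeta_p(1+\alpha+\beta)$ with the local factors of $L(1+2\alpha,\mathrm{sym}^2 f)$, $L(1+2\beta,\mathrm{sym}^2 f)$, and $L(1+\alpha+\beta,\mathrm{sym}^2 f)$. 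The Kronecker symbol $\chi_{m(\ell_1)}$ rather than $\chi_{\ell_1}$ emerges because the sign factor $\tfrac{1-i}{2}+\bigl(\tfrac{-1}{n}\bigr)\tfrac{1+i}{2}$ in the definition of $G_k(n)$ combines with $\sum_a\bigl(\tfrac{a}{\ell_1}\bigr)e(an/\ell_1)$ to produce precisely the primitive Kronecker symbol at conductor $m(\ell_1)$.

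For the ramified primes $p\in\{2\}\cup\{p:p\mid\ell_1 Q\}$, the remaining cases of Lemma \ref{Gausslemma} yield bounded local factors, and each $p\mid Q$ contributes a saving of $p^{-1/2}$ because the Gauss sum $G_\ell(Q\cdot n)$ acquires a factor of order $\sqrt{Q}$ that is dominated by the $Q$ in the denominator; multiplying over $p\mid Q$ produces the claimed $Q^{-1/2}$. The factor $\tau(aQ)$ then arises from a crude count of the ramified local factors. Absolute convergence of the residual Euler product $Z_2$ in the stated region follows from the $1+O(p^{-1-2\delta})$ estimate, and the Dirichlet series representation of $Y$ is obtained by geometrically expanding each local factor in the region $\Re(\alpha),\Re(\beta)>\tfrac12$, $\Re(\gamma)>1$ and collecting the coefficients $C(r_1,r_2,r_3)$.

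The main obstacle will be the simultaneous extraction of the three symmetric-square $L$-factors from a single local Euler product: the diagonal $e_1=e_2$ in the $(n_1,n_2)$-sum produces $\zeta_p(1+\alpha+\beta)$ together with $L_p(1+\alpha+\beta,\mathrm{sym}^2 f)$, while the saturated and boundary portions of the $\ell_2$-sum must be reorganised so as to surrender $L_p(1+2\alpha,\mathrm{sym}^2 f)$ and $L_p(1+2\beta,\mathrm{sym}^2 f)$. Matching these three poles simultaneously against the three sources demands the careful Hecke-algebra bookkeeping already performed by Li in \cite[Lemma 2.5]{Li}; the only genuinely new ingredients here are the mild factor $2^{\delta(r)}$ and the ramification at $p\mid Q$, both handled by direct appeal to the corresponding cases of Lemma \ref{Gausslemma}.
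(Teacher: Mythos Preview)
Your approach is essentially the paper's: factor $Z$ as an Euler product via the multiplicativity of $G_k$ and $\lambda_f$, evaluate the generic local factor through Lemma~\ref{Gausslemma}, and match against the target $L$-factors up to a correction absolutely convergent in the stated region, exactly as in Li's Lemma~2.5. The paper carries this out with the same case split (generic $p\nmid 2\ell_1 aQ$, then $p\mid\ell_1$, then $p\mid aQ$) and the same endpoint estimates.

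Two small corrections. First, your Euler product should run over all odd primes, not just $p\nmid 2a$: the $\ell_2$-sum is over all odd $\ell_2\ge 1$, so primes $p\mid a$ (with $p\nmid Q$) do contribute a local factor, namely $(1-p^{-2\gamma})^{-1}$, coming purely from the $\ell_2$-variable. This is harmless for analyticity in $\Re(\gamma)\ge 1/2+\delta$ but must be included to get the $\tau(aQ)$ bound. Second, your explanation of why $\chi_{m(\ell_1)}$ rather than $\chi_{\ell_1}$ appears is garbled: the sign factor in $G_k(n)$ is already absorbed into the evaluation of Lemma~\ref{Gausslemma}, and the boundary case simply produces $\bigl(\tfrac{\ell_1}{p}\bigr)$ at odd $p\nmid\ell_1$. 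The passage to the Kronecker symbol $\chi_{m(\ell_1)}(p)$ is then just the statement that $\bigl(\tfrac{\ell_1}{p}\bigr)=\chi_{m(\ell_1)}(p)$ for odd $p$, i.e.\ quadratic reciprocity, which is how one identifies the primitive character of conductor $|m(\ell_1)|$.
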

		
		\begin{proof}
			By  multiplicativity we have
			$$Z(\alpha,\beta, \gamma;\ell_{1},a) = \prod_{p} \, \mathcal{F}(p),$$
			where $\mathcal{F}(2)=1$ and for odd prime $p$, $\mathcal{F}(p)$ is given by
			\[
			\mathcal{F}(p)=
			\begin{cases}
				\sum_{n_{1},n_{2},\ell_{2} \geq 0}  \frac{\lambda_{f}(p^{n_{1}}) \lambda_{f}(p^{n_{2}})}{p^{n_{1}\alpha} p^{n_{2} \beta} p^{2 \ell_{2} \gamma} } \frac{G_{2^{\delta(r)}\ell_{1}p^{2\ell_{2}}} (p^{n_{1}+n_{2}})}{p^{n_{1}+n_{2}}} ,\quad  \, &\text{if} \, p\nmid aQ, \\
				\left(1-\frac{1}{p^{2\gamma}}\right)^{-1} \, &\text{if} \, \quad p \mid a, \, p \nmid Q, \\
				
				\sum_{n_{1},n_{2},\ell_{2} \geq 0}  \frac{\lambda_{f}(p^{n_{1}}) \lambda_{f}(p^{n_{2}})}{p^{n_{1}\alpha} p^{n_{2} \beta} p^{2 \ell_{2} \gamma} } \frac{G_{2^{\delta(r)}\ell_{1}p^{2\ell_{2}}} (p^{n_{1}+n_{2}+r_{p}})}{p^{n_{1}+n_{2}+r_{p}}} ,\quad  \, &\text{if} \, p\nmid a,  \,\, p^{r_{p}} || Q \\
				
				\sum_{\ell_{2} \geq 0} \frac{1}{p^{2\ell_{2}\gamma}} \frac{G_{2^{\delta(r)} \ell_{1} p^{2\ell_{2}}}(p^{r_{p}})}{p^{r_{p}}}, \quad \, &\text{if } \, p\mid (a, Q), \, \, p^{r_{p}} || Q.
				
			\end{cases}
			\]
			Let us assume that $\Re(\alpha), \Re(\beta) \geq -c$ and $\Re(\gamma) \geq 1/2+\delta$ for some $0< c<\delta <1/3$. For $p \nmid 2 \ell_{1}aQ$, we apply   Lemma \ref{Gausslemma} to $	\mathcal{F}(p)$ to arrive at 
			\begin{align*}
			&\sum_{\ell_{2} } \frac{1}{p^{2\ell_{2} \gamma}} \left(\sum_{h=0}^{\ell_{2}} \frac{\phi(p^{2h})}{p^{2h}} \sum_{\substack{i,j \\ i+j=2h}} \frac{\lambda_{f}(p^{i} )+\lambda_{f}(p^{j})}{p^{i \alpha+j\beta}} + \frac{\chi_{\ell_{1}}(p)}{p^{1/2}}  \sum_{\substack{i,j\\ i+j=2\ell_{2}+1}}\frac{\lambda_{f}(p^{i} )+\lambda_{f}(p^{j})}{p^{i \alpha+j\beta}}  \right) \\
				&= 1+\frac{\lambda_{f}(p) \chi_{\ell_{1}}(p)}{p^{1/2}} \left(\frac{1}{p^{\alpha}}+\frac{1}{p^{\beta}}\right) +O\left(\frac{1}{p^{1+2\delta-2c}}\right) \\
				&= \left(1-\frac{\lambda_{f}(p) \chi_{\ell_{1}}(p)}{p^{1/2+\alpha}}+\frac{\chi_{\ell_{1}}^{2}(p)}{p^{1+2\alpha}}\right)^{-1} \left(1-\frac{\lambda_{f}(p) \chi_{\ell_{1}}(p)}{p^{1/2+\alpha}}+\frac{\chi_{\ell_{1}}^{2}(p)}{p^{1+2\alpha}}\right)^{-1} \\
				&\times \left(1-\frac{A(p)}{p} \left(\frac{1} {p^{2\alpha}}+\frac{1}{p^{2\beta} }+\frac{1}{p^{\alpha+\beta}}\right) -\frac{1}{p^{1+\alpha+\beta}}+O\left(\frac{1}{p^{3/2-3c}}\right)\right) + O\left(\frac{1}{p^{1+2\delta-2c}}\right),
			\end{align*}
			and for $p\mid \ell_{1}$ and $p\nmid 2 aQ$, we have 
			$$\mathcal{F}(p)= 1-\frac{A(p)}{p} \left(\frac{1} {p^{2\alpha}}+\frac{1}{p^{2\beta} }+\frac{1}{p^{\alpha+\beta}}\right) -\frac{1}{p^{1+\alpha+\beta}}+O\left(\frac{1}{p^{3/2-3c}}\right) + O\left(\frac{1}{p^{1+2\delta-2c}}\right),$$
			where $A(n)$ are defined by 
			$$L(s, \text{sym}^{2}{f})=\sum_{n}\frac{A(n)}{n^{s}}, \quad \Re(s)>1.$$
			Choose  $c \leq \delta/2 <1/6$. 
			Note that 
			$\mathcal{F}(p) \ll p^{-1/2}$ for $p\mid Q$ and 
			$$\prod_{p \mid 2aQ} \left(1+\frac{C}{p^{1/2-c}}\right) \ll 2^{\omega(2aQ)} \ll \tau(aQ),$$
			where $\omega(n)$ is the number of prime factors of $n$ and $C$ is  any constant.
		\end{proof}
		The following lemma  is taken from  Li \cite[Lemma 5.7]{Li}. 
		\begin{lemma} \label{Lilemma}
			Suppose $\Re(s)\geq 3/5$ and write $u=-1/2+i\mu, v=-1/2+i\nu$ for real $\mu, \nu$.  We have 
			\begin{align*}
				\Big \vert &\mathop{\sum }_{r_{1},r_{2},r_{3}}\frac{C(r_{1},r_{2},r_{3})}{r_{1}^{1/2+u+it_{1}}r_{2}^{1/2+v+it_{2}} r_{3}^{2s}} G\left(\frac{r_{1}}{R_{1}}\right) G\left(\frac{r_{2}}{R_{2}}\right) \Big \vert  \\
				&\ll \left(1+|t_{1}|\right)^{1/10} \, \left(1+|t_{2}|\right)^{1/10} (1+|\mu|) (1+|\nu|) \, \exp\left(-c_{1} \sqrt{\log (R_{1}R_{2})}\right),
			\end{align*}
			where the  implied constant and $c_{1}$ may depend on $f$. 
		\end{lemma}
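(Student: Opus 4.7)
The plan is to interpret the sum as a double Mellin--Barnes contour integral over $Y(\alpha,\beta,\gamma;\ell_1)$, apply the factorization supplied by Lemma \ref{dirichlet}, and then shift the contours past the line $\Re = 1$ of the $L$-factors in the denominator into the classical zero-free regions of $\zeta$ and $L(s, \mathrm{sym}^{2} f)$; the exponential saving $\exp(-c_1 \sqrt{\log(R_1 R_2)})$ will emerge by optimizing the depth of the contour shift.

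First I would apply Mellin inversion to each smooth cutoff $G(r_j/R_j)$. Writing $\alpha = 1/2 + u + it_1 = i(\mu + t_1)$ and $\beta = 1/2 + v + it_2 = i(\nu + t_2)$ (both purely imaginary since $\Re(u) = \Re(v) = -1/2$), and using that $\Re(2s) \geq 6/5$ makes the $r_3$-series absolutely convergent, the expression inside the absolute value becomes
$$\frac{1}{(2\pi i)^{2}} \int_{(2\epsilon)} \int_{(2\epsilon)} \tilde{G}(w_{1})\, \tilde{G}(w_{2})\, R_{1}^{w_{1}} R_{2}^{w_{2}}\, Y\!\left(\alpha + w_{1},\, \beta + w_{2},\, s;\, \ell_{1}\right)\, dw_{1}\, dw_{2}.$$
By Lemma \ref{dirichlet}, $Y$ equals the bounded analytic piece $Z_{2}$ divided by $\zeta(1 + \alpha + \beta + w_1 + w_2)$ and the three factors $L(1 + 2\alpha + 2w_1, \mathrm{sym}^{2} f)$, $L(1 + 2\beta + 2w_2, \mathrm{sym}^{2} f)$, $L(1 + \alpha + \beta + w_1 + w_2, \mathrm{sym}^{2} f)$, so the only obstructions to shifting leftward are the nontrivial zeros of $\zeta$ and of $L(\cdot, \mathrm{sym}^{2} f)$.

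Next I would shift both contours from $\Re(w_j) = 2\epsilon$ leftward to $\Re(w_j) = -\eta$ for some small $\eta > 0$, while keeping them inside the classical Hadamard--de la Vallée-Poussin zero-free region of width $c/\log(|t|+2)$. There the standard bounds $1/\zeta(\sigma + it), 1/L(\sigma + it, \mathrm{sym}^{2} f) \ll (\log(|t|+2))^{A}$ apply. The super-polynomial decay of $\tilde{G}(w_j)$ effectively truncates $|\Im(w_j)|$ to some height $H$, so the $L$-factors in the denominator are evaluated at imaginary heights of order $H + |t_1| + |t_2| + |\mu| + |\nu|$. Choosing $\eta \asymp c/\log H$ yields the saving $(R_1 R_2)^{-\eta}$, and balancing against $H$ produces the target decay $\exp(-c_{1} \sqrt{\log(R_1 R_2)})$.

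The residual polynomial factors $(1+|t_j|)^{1/10}$ and $(1+|\mu|)(1+|\nu|)$ come out of the log-power bounds for $1/\zeta$, $1/L(\cdot, \mathrm{sym}^{2} f)$ on the shifted line, combined with mild (sub)convexity estimates at the shifted arguments and the Stirling-type control of $\tilde{G}$ and of the Gamma data packaged inside $Z_{2}$. The main obstacle will be maintaining the shifted contour uniformly inside the zero-free region when the heights contributed by $t_1, t_2, \mu, \nu$ and $\Im(w_j)$ all pile up additively inside each $L$-factor of the denominator; this forces a careful coupling between $\eta$, $H$, and the external parameters, and is what ultimately pins down both the $\sqrt{\log R_1 R_2}$ rate and the precise exponents $1/10$ and $1$ in the polynomial factors.
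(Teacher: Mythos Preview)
The paper does not actually prove this lemma; it simply cites it as \cite[Lemma~5.7]{Li}, with the accompanying remark that Li's original statement is for $t_{1}=t$, $t_{2}=-t$ but that his method generalizes to arbitrary $t_{1},t_{2}$. So there is no in-paper argument to compare against.

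Your sketch is a plausible reconstruction of the underlying argument. The saving $\exp\bigl(-c_{1}\sqrt{\log(R_{1}R_{2})}\bigr)$ is the standard signature of a contour shift into the classical zero-free region of width $\asymp 1/\log T$, followed by optimizing the depth $\eta$ against the truncation height forced by the decay of $\tilde{G}$. The factorization of $Y$ from Lemma~\ref{dirichlet} puts $\zeta(1+\cdot)$ and three copies of $L(1+\cdot,\mathrm{sym}^{2}f)$ in the denominator, so their zeros are precisely the obstruction, and the usual bounds $1/\zeta,\ 1/L(\cdot,\mathrm{sym}^{2}f)\ll (\log T)^{A}$ at the edge of the zero-free region account for the mild polynomial factors in $t_{1},t_{2},\mu,\nu$. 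The one point you correctly flag as the main bookkeeping burden---keeping the shifted contours uniformly inside the zero-free region while the imaginary parts from $t_{1},t_{2},\mu,\nu,\Im w_{j}$ accumulate additively---is genuine but routine; Li's paper is where one would verify the precise exponents $1/10$ and $1$.
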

		
		\begin{remark}
			In fact,  Li \cite{Li} proved this lemma with $t_{1}=t, t_{2}=-t$, however his method also  proves the above lemma for any $t_{1}$ and $t_{2}$. 
		\end{remark}

		In view of Lemma \ref{dirichlet} the $u,v$-integrals in \eqref{Zintegration} are given by
		\begin{align*}
			& \sum_{r_{1},r_{2},r_{3}} \frac{C(r_{1},r_{2},r_{3})}{r_{1}^{1/2+it_{1}-s} r_{2}^{1/2+it_{2}-s}r_{3}^{2s}}  \mathop{\sum \sum}_{n_{1}, n_{2}} \frac{\lambda_{f}(n_{1} )\chi_{m(\ell_{1})}(n_{1})}{n_{1}^{1+it_{1}-s}} \frac{\lambda_{f}(n_{2}) \chi_{m(\ell_{1})}(n_{2})}{n_{2}^{1+it_{1}-s}} \, V\left(\frac{r_{1}n_{1}}{N_{1}}\right) V\left(\frac{r_{2}n_{2}}{N_{1}}\right) \\
			&= \mathop{\sum \sum }_{R_{1},R_{2}\, \text{dyadic}} \sum_{r_{1},r_{2},r_{3}} \frac{C(r_{1},r_{2},r_{3})}{r_{1}^{1/2+it_{1}-s} r_{2}^{1/2+it_{2}-s}r_{3}^{2s}}  G\left(\frac{r_{1}}{R_{1}}\right)\, G\left(\frac{r_{2}}{R_{2}}\right)\\
			&\times \mathop{\sum \sum}_{n_{1}, n_{2}} \frac{\lambda_{f}(n_{1} )\chi_{m(\ell_{1})}(n_{1})}{n_{1}^{1+it_{1}-s}} \frac{\lambda_{f}(n_{2}) \chi_{m(\ell_{1})}(n_{2})}{n_{2}^{1+it_{1}-s}} \, V_{1}\left(\frac{n_{1}R_{1}}{N_{1}}\right) \, V_{1}\left(\frac{n_{2}R_{2}}{N_{1}}\right) \, V\left(\frac{r_{1}n_{1}}{N_{1}}\right) V\left(\frac{r_{2}n_{2}}{N_{1}}\right) , 
		\end{align*}
		where 
		$$V_{1}(x)=G(4x)+G(2x)+G(x)+G(2x)+G(4x)$$
		which satisfies  $V_{1}(x)=1$ if $x \in [1/4,6]$.  We now apply Mellin inversion theorem  to separate variables $r_{i}$ and $n_{i}$ and make some  change of  variables to get the following expression for $T(N_{1},N_{2};\alpha,it_{1},it_{2})$ 
		\begin{align} \label{Tfinalexpres}
			&\frac{1}{(2 \pi i)^{3}} \int_{(\epsilon)} \int_{(2\epsilon)} \int_{(2 \epsilon)}\, \tilde{J}(1-s) \Gamma(s) \, \sideset{}{^\star}{\sum}_{(\ell_{1},2)} \left(\cos + \frac{\ell_{1}}{|\ell_{1}|} \sin \right)\frac{\pi s}{2} \left(\frac{QN_{1}N_{2}}{|\ell_{1}| X \alpha^{\prime}}\right)^{s} \tilde{V}(u+s)  \\
			&\times \tilde{V}(v+s) \mathop{\sum \sum }_{R_{1},R_{2}\, \text{dyadic}} \sum_{r_{1},r_{2},r_{3}} \frac{C(r_{1},r_{2},r_{3})}{r_{1}^{1/2+it_{1}+u} r_{2}^{1/2+it_{2}+v}r_{3}^{2s}}  G\left(\frac{r_{1}}{R_{1}}\right)\, G\left(\frac{r_{2}}{R_{2}}\right) \notag \\
			&\times \mathop{\sum \sum}_{n_{1}, n_{2}} \frac{\lambda_{f}(n_{1} )\chi_{m(\ell_{1})}(n_{1})}{n_{1}^{1+it_{1}+u}} \frac{\lambda_{f}(n_{2}) \chi_{m(\ell_{1})}(n_{2})}{n_{2}^{1+it_{1}+v}} \, V_{1}\left(\frac{n_{1}R_{1}}{N_{1}}\right) \, V_{1}\left(\frac{n_{2}R_{2}}{N_{1}}\right) \, N_{1}^{u}N_{2}^{v} \, du\, dv \,  ds. \notag 
		\end{align}
		
		In the following lemma we give an estimate for $T(N_{1},N_{2};\alpha,it_{1},it_{2})$.
		\begin{lemma} \label{Texpressionbound}
			We have
			$$\Big \vert T(N_{1},N_{2};\alpha,it_{1},it_{2}) \Big \vert      \ll \frac{Q\sqrt{N_{1}N_{2}}}{\alpha^{\prime} X} (1+|t_{1}|)^{\frac{17}{10}} \, (1+|t_{1}|)^{\frac{17}{10}} .$$
		\end{lemma}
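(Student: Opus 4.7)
The plan is to estimate the triple integral in \eqref{Tfinalexpres} by contour shifts combined with the mean-square bound of Lemma \ref{coprimelargesieve} and the Dirichlet-series estimate of Lemma \ref{Lilemma}. The underlying observation is that once $\Re(u)$ and $\Re(v)$ are moved to $-1/2$, the cut-off double sum over $n_1, n_2$ becomes a product of two smoothly-truncated twisted-$L$-sums on the critical line, while the $r_1, r_2, r_3$-sum in \eqref{Tfinalexpres} is exactly the Dirichlet series whose size is controlled by Lemma \ref{Lilemma}.

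Concretely, I would proceed in three stages. First, shift $\Re(u), \Re(v)$ from $2\epsilon$ down to $-1/2$ (no poles are crossed), so that $|N_1^u N_2^v| = (N_1 N_2)^{-1/2}$ and the inner $n_i$-sums land on the critical line. Second, shift the $s$-contour to $\Re(s) = 1+\epsilon$, so the factor $(Q N_1 N_2/(|\ell_1| X \alpha'))^s$ gives absolute convergence of the $\ell_1$-sum. Third, dyadically decompose $|\ell_1| \sim L$ and apply Cauchy--Schwarz in $\ell_1$ to the product of the two inner sums: each of the two resulting mean-square sums is bounded by Lemma \ref{coprimelargesieve}, producing (up to logarithmic factors) a factor $L^{1/2}(1+|t_1+\mu|)^{3/2}$ and $L^{1/2}(1+|t_2+\nu|)^{3/2}$ respectively; simultaneously, Lemma \ref{Lilemma} bounds the $r_1, r_2, r_3$-sum by $(1+|t_1|)^{1/10}(1+|t_2|)^{1/10}(1+|\mu|)(1+|\nu|) \exp(-c_1\sqrt{\log R_1 R_2})$. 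Finally, integrate out $\mu, \nu$ and $\Im(s)$ using the rapid decay of $\tilde{V}(u+s)\tilde{V}(v+s)\tilde{J}(1-s)\Gamma(s)$ and sum dyadically in $L$ and in $R_1, R_2$; the exponential factor in $R_1, R_2$ from Lemma \ref{Lilemma} makes that last dyadic sum convergent.

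The main obstacle is the precise accounting of the imaginary-part dependence. The mean-square input $(1+|t_i+\mu|)^{3/2}$ per $L$-value must combine with Lemma \ref{Lilemma}'s $(1+|t_i|)^{1/10}$ after absorbing the polynomial growth $(1+|\mu|)(1+|\nu|)$ into the rapid decay of $\tilde{V}(u+s)\tilde{V}(v+s)$, and the subsequent integration over $\Im(s)$ has to be handled so that the total $(1+|t_i|)$-dependence matches the stated exponent $17/10$ rather than something slightly worse. A secondary technical point is choosing $\Re(s) = 1+\epsilon$ with $\epsilon$ small enough that the geometric dyadic sum in $L$ is convergent while still gaining the factor $Q N_1 N_2 /(|\ell_1|X\alpha')$ (which, when multiplied by the $(N_1 N_2)^{-1/2}$ from the $u, v$-shift and balanced with the large sieve savings, produces the target prefactor $Q\sqrt{N_1 N_2}/(\alpha' X)$).
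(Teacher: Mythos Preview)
Your strategy---shift $\Re(u),\Re(v)$ to $-1/2$, control the $r_1,r_2,r_3$-sum via Lemma~\ref{Lilemma}, then Cauchy--Schwarz over $\ell_1$ with a large-sieve input---is exactly the paper's. Two points, however, separate your sketch from the paper's proof.

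First, the large-sieve input you invoke is the wrong one. Lemma~\ref{coprimelargesieve} concerns the family $\bigl(\tfrac{8d}{\cdot}\bigr)$ with $(d,2)=1$, whereas in \eqref{Tfinalexpres} the outer sum is over squarefree $\ell_1$ and the characters are $\chi_{m(\ell_1)}$. The correct tool is the bound \eqref{modifiedlarge} (Li's Lemma 5.3), which the paper cites explicitly; it gives the same $(1+|t|)^{3}M$ shape but is stated for the relevant family.

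Second, and more substantively, a single contour $\Re(s)=1+\epsilon$ does \emph{not} recover the stated prefactor. After Cauchy--Schwarz plus \eqref{modifiedlarge}, each dyadic block $|\ell_1|\sim L$ contributes $L^{-(1+\epsilon)}\cdot L^{1/2}\cdot L^{1/2}=L^{-\epsilon}$, so the dyadic $L$-sum is $\asymp 1/\epsilon$ and the total bound is
\[
\frac{1}{\epsilon}\Bigl(\frac{QN_1N_2}{\alpha' X}\Bigr)^{1+\epsilon}(N_1N_2)^{-1/2}
\;=\;\frac{Q\sqrt{N_1N_2}}{\alpha' X}\cdot\frac{(QN_1N_2/(\alpha' X))^{\epsilon}}{\epsilon},
\]
which carries an unavoidable $(N_1N_2)^{\epsilon}$ (or, after optimising $\epsilon$, a $\log(N_1N_2)$) loss. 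The paper removes this by splitting the $\ell_1$-sum at the threshold $|\ell_1|\le QN_1N_2/(X\alpha')$ versus $|\ell_1|> QN_1N_2/(X\alpha')$ and taking $\Re(s)=3/5$ on the first piece and $\Re(s)=6/5$ on the second; the two contributions then balance exactly to $Q\sqrt{N_1N_2}/(\alpha' X)$ with no $\epsilon$-loss. Your version would still suffice for the downstream estimate of $\mathcal{R}$ (there is ample room in the $\log$-powers), but it does not prove Lemma~\ref{Texpressionbound} as stated.
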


		\begin{proof}
			We split the  $\ell_{1}$-sum as follows  $$T(...)=T_{(|\ell_{1}| \leq N_{1}N_{2}/X\alpha^{\prime})}(...)+T_{(|\ell_{1}|>N_{1}N_{2}/X\alpha^{\prime})}(...).$$ 
			First we consider the contribution of $|\ell_{1}| \leq \frac{QN_{1}N_{2}}{X \alpha^{\prime}}$.  In this case, in the expression we move $s,u,v$ contours in    \eqref{Tfinalexpres} to $\Re(s)=3/5$  and  $\Re(u), \Re(v)=-1/2$. Then we estimate  the resulting expression using $$|\Gamma(s) \cos s | + |\Gamma(s) \sin s |\ll |s|^{\Re(s)-1/2}$$ and Lemma \ref{Lilemma} to get the following  bound for $T_{(|\ell_{1}| \leq N_{1}N_{2}/X\alpha^{\prime})}(...)$ 
			\begin{align*}
				&  \left(\frac{\alpha^{\prime}X}{Q}\right)^{-3/5} (N_{1}N_{2})^{1/10}(1+|t_{1}|)^{1/10} (1+|t_{2}|)^{1/10} \sum_{R_{1},R_{2}\,  \text{dyadic}} \exp(-c_{1}\sqrt{\log(R_{1}R_{2})}) \\
				& \times \mathop{\int \int }_{-\infty}^{\infty} \frac{1}{(1+|\mu|)^{10}(1+|\nu|)^{10}} \sum_{|\ell_{1}| \leq \frac{QN_{1}N_{2}}{X\alpha^{\prime}}} \frac{1}{|\ell_{1}|^{3/5}} \Big \vert \sum_{n_{1}} \frac{\lambda_{f}(n_{1}) \chi_{m(\ell_{1})}(n_{1})}{n_{1}^{1/2+i\mu+it_{1}}} G\left(\frac{n_{1}}{N_{1}^{\prime}}\right) \\ 
				&\times \sum_{n_{2}} \frac{\lambda_{f}(n_{2}) \chi_{m(\ell_{1})}(n_{2})}{n_{2}^{1/2+i\nu+it_{1}}} G\left(\frac{n_{2}}{N_{2}^{\prime}}\right)  \Big \vert \, d\mu \, d \nu,
			\end{align*}
			where $N_{1}^{\prime} \asymp N_{1}/R_{1}$ and $N_{2}^{\prime} \asymp N_{2}/R_{2}$. We recall  Lemma 5.3 from \cite{Li} which asserts that
			\begin{equation} \label{modifiedlarge}
				\sideset{}{^\star}{\sum}_{M\leq |m| \leq 2M}\Big\vert \sum_{n} \frac{\lambda_{f}(n) \chi_{m}(n)}{n^{1/2+it}}  \, G\left( \frac{n}{N}\right)\Big \vert^{2} \ll_{f} (1+|t|)^{3} \, M \log (2+|t|).
			\end{equation}
			We now make a dyadic decomposition in $\ell_{1}$ sum and apply Cauchy-Scwartz inequality and then by the  above estimate we see thay the $\ell_{1}$-sum is bounded by
			$$ \left(\frac{QN_{1}N_{2}}{X\alpha^{\prime}}\right)^{2/5} (1+|t_{1}|)^{16/10} (1+|t_{2}|)^{16/10} (1+|\mu|)^{1/10} (1+|\nu|)^{1/10}.$$
			Therefore 
			\begin{align*}
				T_{(|\ell_{1}| \leq N_{1}N_{2}/X\alpha^{\prime})}(...) \ll \frac{Q\sqrt{N_{1}N_{2}}}{\alpha^{\prime} X} (1+|t_{1}|)^{\frac{17}{10}} \, (1+|t_{1}|)^{\frac{17}{10}} .
			\end{align*}
			In the second case when $|\ell_{1}| > \frac{QN_{1}N_{2}}{\alpha^{\prime} X}$ we move $s$ integral contour to $\Re(s)=6/5$ and $u, v$ integrals to $\Re(u), \Re(v)=-1/2$.  Therefore we see that $T_{(|\ell_{1}|>N_{1}N_{2}/X\alpha^{\prime})}(...)$ is dominated by
			\begin{align*}
				& \left(\frac{Q}{\alpha^{\prime} X}\right)^{6/5}(N_{1}N_{2})^{7/10}(1+|t_{1}|)^{1/10} (1+|t_{2}|)^{1/10} \sum_{R_{1},R_{2}\,  \text{dyadic}} \exp(-c_{1}\sqrt{\log(R_{1}R_{2})}) \\
				& \times \mathop{\int \int }_{-\infty}^{\infty} \frac{1}{(1+|\mu|)^{10}(1+|\nu|)^{10}} \sideset{}{^\star}{\sum}_{|\ell_{1}| > \frac{QN_{1}N_{2}}{X\alpha^{\prime}}} \frac{1}{|\ell_{1}|^{6/5}} \Big \vert \sum_{n_{1}} \frac{\lambda_{f}(n_{1}) \chi_{m(\ell_{1})}(n_{1})}{n_{1}^{1/2+i\mu+it_{1}}} G\left(\frac{n_{1}}{N_{1}^{\prime}}\right) \\ 
				&\times \sum_{n_{2}} \frac{\lambda_{f}(n_{2}) \chi_{m(\ell_{1})}(n_{2})}{n_{2}^{1/2+i\nu+it_{1}}} G\left(\frac{n_{2}}{N_{2}^{\prime}}\right)  \Big \vert \, d\mu \, d \nu.
			\end{align*}
			Consider  $\ell_{1}$ sum
			$$\sum_{\substack{J\, \text{dyadic} \\ J > \frac{QN_{1}N_{2}}{  \alpha^{\prime}X}} } \frac{1}{J^{6/5}} \sideset{}{^\star}{\sum}_{ |\ell_{1}| \sim J} \Big \vert \sum_{n_{1}} \frac{\lambda_{f}(n_{1}) \chi_{m(\ell_{1})}(n_{1})}{n_{1}^{1/2+i\mu+it_{1}}} G\left(\frac{n_{1}}{N_{1}^{\prime}}\right) \, \sum_{n_{2}} \frac{\lambda_{f}(n_{2}) \chi_{m(\ell_{1})}(n_{2})}{n_{2}^{1/2+i\nu+it_{1}}} G\left(\frac{n_{2}}{N_{2}^{\prime}}\right)  \Big \vert .$$
			By Cauchy-Schwartz inequality and \eqref{modifiedlarge} we see that the above expression  is dominated by
			
			\begin{align*}
				&(1+|t_{1}|)^{16/10} (1+|t_{2}|)^{16/10} (1+|\mu|)^{1/10} (1+|\nu|)^{1/10} \, \sum_{\substack{J\, \text{dyadic} \\ J > (QN_{1}N_{2})/\alpha^{\prime}X}} \frac{1}{J^{1/5}} \\
				&\ll \left(\frac{QN_{1}N_{2}}{\alpha^{\prime} X} \right)^{-1/5}(1+|t_{1}|)^{16/10} (1+|t_{2}|)^{16/10} (1+|\mu|)^{1/10} (1+|\nu|)^{1/10} . 
			\end{align*}
			Therefore we have 
			\begin{align*}
				T_{(|\ell_{1}| > N_{1}N_{2}/X\alpha^{\prime})}(...) \ll \frac{Q\sqrt{N_{1}N_{2}}}{\alpha^{\prime} X} (1+|t_{1}|)^{\frac{17}{10}} \, (1+|t_{1}|)^{\frac{17}{10}} .
			\end{align*}
			This concludes the proof  of the lemma.
		\end{proof}

		We have the following lemma which we gives estimates for $T_{r}(a,Q)$.
		
		\begin{lemma} \label{trbound}
			We have 
			$$T_{r}(a,Q) \ll \frac{Q a^{2}\mathcal{M}}{2^{r}X}(\log X)^{2} .$$
		\end{lemma}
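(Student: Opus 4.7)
The plan is to combine Lemma~\ref{Trvalue} and Lemma~\ref{Texpressionbound} and then perform the dyadic summation over $N_1,N_2$. Starting from Lemma~\ref{Trvalue},
$$T_r(a,Q)\ll (\log X)^2\sum_{N_1,N_2\text{-dyadic}}\left(1+\frac{N_1}{\mathcal M}\right)^{-6}\!\left(1+\frac{N_2}{\mathcal M}\right)^{-6} I(N_1,N_2),$$
I would insert the pointwise estimate of Lemma~\ref{Texpressionbound}, noting that in our setting $\alpha=2^r/(16 a^2)$, so $\alpha'\asymp 2^r/a^2$, which gives
$$|T(N_1,N_2;\alpha,it_1,it_2)|\ll \frac{Qa^2\sqrt{N_1 N_2}}{2^r X}(1+|t_1|)^{17/10}(1+|t_2|)^{17/10}.$$

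Next I would handle the fourfold $t$-integral defining $I(N_1,N_2)$. Since the bound on $|T(\cdots)|$ depends on $t_1,t_2$ only through polynomial factors of exponent $17/10$, while the integrand carries weights $(1+|t_1-t_3|)^{-100}(1+|t_2-t_4|)^{-100}(1+|t_3|)^{-100}(1+|t_4|)^{-100}$, all four integrals converge absolutely to $O(1)$; shifting $t_1\mapsto t_1+t_3$ and $t_2\mapsto t_2+t_4$ makes this routine. Consequently,
$$I(N_1,N_2)\ll \frac{Qa^2\sqrt{N_1 N_2}}{2^r X}.$$

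It remains to bound the dyadic sum
$$\sum_{N_1,N_2\text{-dyadic}}\left(1+\frac{N_1}{\mathcal M}\right)^{-6}\!\left(1+\frac{N_2}{\mathcal M}\right)^{-6}\sqrt{N_1 N_2}.$$
For each of $N_1,N_2$, the contribution of $N_i\lesssim \mathcal M$ is $\ll \sum_{N_i\leq \mathcal M}\sqrt{N_i}\ll \sqrt{\mathcal M}$ by geometric summation, and the contribution of $N_i\gtrsim\mathcal M$ is $\ll \sum_{j\geq 0} 2^{-11j/2}\sqrt{\mathcal M}\ll \sqrt{\mathcal M}$ because the sixth-power denominator dominates the $\sqrt{N_i}$ factor. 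Thus the full double sum is $O(\mathcal M)$, which on combining with the previous estimates yields
$$T_r(a,Q)\ll \frac{Qa^2\mathcal M}{2^r X}(\log X)^2,$$
as claimed. There is no real obstacle here: the work was already done in Lemmas~\ref{Trvalue} and~\ref{Texpressionbound}, and this lemma is the clean bookkeeping step that converts the pointwise bound into the final estimate, with the mild point being to verify that the $(1+|t_i|)^{17/10}$ growth is absorbed by the $100$-th power weights.
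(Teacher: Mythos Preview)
Your proposal is correct and follows exactly the paper's approach: combine Lemma~\ref{Trvalue} with Lemma~\ref{Texpressionbound} (using $\alpha'\asymp 2^r/a^2$) and then sum dyadically over $N_1,N_2$ to pick up the factor $\mathcal{M}$. You have simply spelled out in more detail the absorption of the $(1+|t_i|)^{17/10}$ factors by the rapidly decaying weights and the geometric nature of the dyadic sum, which the paper leaves implicit.
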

		
		\begin{proof}
			From Lemma \ref{Texpressionbound} and Lemma \ref{Trvalue} we see that 
			\begin{align*}
				T_{r}(a,Q) &\ll \frac{Q a^{2}(\log X)^{2}}{2^{r}X} \mathop{\sum \sum}_{N_{1}, N_{2} - \, \text{dyadic}} \left(1+\frac{N_{1}}{\mathcal{M}}\right)^{-6} \, \left(1+\frac{N_{2}}{\mathcal{M}}\right)^{-6} \sqrt{N_{1}N_{2}} \\
				& \ll \frac{Q a^{2} \mathcal{M}(\log X)^{2}}{2^{r}X}. 
			\end{align*}
			This proves the lemma.
		\end{proof}

		\subsection{Estimation of  $\mathcal{R}$} Recall that 
		\begin{align*}
			\mathcal{R} =\frac{X }{8} \sum_{Q \in \{q,q^2\}} \, \varepsilon_{Q}  \sum_{\substack{a\leq Y \\ (a,2q)=1}} \frac{\mu(a)}{a^{2}}    \, T(a,Q)
		\end{align*}
		with
		$$T(a,Q)=-\left(T_{0}(a,Q)-\sum_{r \geq 1}T_{r}(a,Q) \right) .$$
		By Lemma \ref{trbound} we infer that
		$$T(a,Q)\ll  \frac{Q a^{2}\mathcal{M}}{X}(\log X)^{2} .$$
		Thus we have 
		\begin{equation} \label{errorestimate}
			\mathcal{R} \ll Y \mathcal{M}\ll \frac{X}{(\log X)^{10}}.
		\end{equation}
		\subsection{The main term $M$}Recall that
		\begin{align*}
			M=&\frac{X \check{J}(0)}{8} \sum_{Q \in \{q,q^2\}} \, \varepsilon_{Q}  \sum_{\substack{a\leq Y \\ (a,2q)=1}} \frac{\mu(a)}{a^{2}} \mathop{\sum \sum}_{\substack{n_{1},n_{2} \\ n_{1}n_{2}Q=\square \\  (n_{1}n_{2},2a)=1}} \frac{\lambda_{f}(n_{1} )\lambda_{f}(n_{2})}{\sqrt{n_{1}n_{2}}}  \\
			& \times  \prod_{p \mid n_{1}n_{2} Q}\left(1-\frac{1}{p}\right) \, W\left(\frac{n_{1}}{\mathcal{M}}\right) W\left(\frac{n_{2}}{\mathcal{M}}\right).
		\end{align*}
		Upon using 
		$$\sum_{\substack{a \leq Y\\ (a,2n_{1}n_{2})=1}} \frac{\mu(a)}{a^{2}} = \frac{8}{\pi^{2}} \prod_{p \mid n_{1}n_{2}}\left(1-\frac{1}{p^{2}}\right)^{-1} +O\left(\frac{1}{Y}\right),$$
		we infer that
		\begin{align*}
			M=\frac{X \check{J}(0)}{\pi^{2}} \sum_{Q \in \{q,q^2\}} & \varepsilon_{Q}  \mathop{\sum \sum}_{\substack{n_{1},n_{2} \\ n_{1}n_{2}Q=\square \\  (n_{1}n_{2},2)=1}} \frac{\lambda_{f}(n_{1}\lambda_{f}(n_{2}))}{\sqrt{n_{1}n_{2}}} \, \prod_{p \mid n_{1}n_{2} Q}\left(1-\frac{1}{p+1}\right) W\left(\frac{n_{1}}{\mathcal{M}}\right) \\  \times  W\left(\frac{n_{2}}{\mathcal{M}}\right) 
			&+O\left(\frac{X}{Y} \sum_{\substack{n_{1} n_{2}=\square}} \frac{\tau(n_{1}) \tau(n_{2})}{\sqrt{n_{1}n_{2}}} \vert W(n_{1}/\mathcal{M}) W(n_{2}/\mathcal{M})\vert\right).
		\end{align*}
		Note that the $O$-term is further dominated by  $\frac{X}{Y } (\log X)^{12}$.  Following calculations of  Petrow (see \cite[p. 1586-1588]{petrow}, we obtain that
		$$M= C_{f} \, \check{J}(0) \, X  \, (\log X)^{3} + O\left(X\, (\log X)^{2}\right),$$
		where $C_{f}$ is a constant depending only on $f$ and $C_{f}=0$ if and only if $i^{k}\eta =1$ and $q$ is square. 
		Therefore  we have
		\begin{equation} \label{smallycontri}
			D(\leq \mathcal{Y}) = C_{f} \, \check{J}(0) \, X  \, (\log X)^{3} + O\left(X\, (\log X)^{2}\right).
		\end{equation}
		
		\subsection{Contribution of large $a$, $a> Y$:}
		Note that
		\begin{align*}
			\sum_{\substack{n=1 \\ (n,a)=1}}^{\infty} \frac{\lambda_{f}(n) \chi_{8d}(n)}{n^{1/2}} \, W\left( \frac{n}{\mathcal{M}}\right) 
			&=\sum_{N \, \text{dyadic}}\frac{1}{(2\pi i)^{2}} \int_{(2)} \frac{\Gamma\left(\frac{k}{2}+w\right) }{(2\pi/\sqrt{q})^{w}\Gamma(\frac{k}{2})} \frac{\mathcal{M}^{w}}{w^{2}} \int_{(\epsilon)}  \\
			& \times \sum_{(n,a)=1} \frac{\lambda_{f}(n)}{n^{1/2+u}} \left(\frac{8d}{n}\right) V\left(\frac{n}{N}\right) N^{u-w} \, \tilde{G}(u-w) \, du \, dw. 
		\end{align*}
		Using Lemma \ref{coprimelargesieve} and calculations similar to Section \ref{propproofsec1} we infer that
		$$ \sum_{(d,2)=1} J\left(\frac{8da^{2}}{X}\right) \, \Big \vert \sum_{\substack{n=1 \\ (n,a)=1}}^{\infty} \frac{\lambda_{f}(n) \chi_{8d}(n)}{n^{1/2}} \, W\left( \frac{n}{\mathcal{M}}\right) \Big \vert^{2} \, \ll \frac{X (\log X)^{2}}{a^{2}} \, \tau(a)^{5}.$$
		Therefore 
		$$ \sum_{\substack{a>Y \\ (a,2)=1}  }  \sum_{(d,2)=1} J\left(\frac{8da^{2}}{X}\right) \, \Big \vert \sum_{\substack{n=1 \\ (n,a)=1}}^{\infty} \frac{\lambda_{f}(n) \chi_{8d}(n)}{n^{1/2}} \, W\left( \frac{n}{\mathcal{M}}\right) \Big \vert^{2} \, \ll \frac{X (\log X)^{2}}{Y^{1-\epsilon}} ,$$
		for any $\epsilon>0$. This is acceptable  as we have $Y=(\log X)^{200}$. Therefore by inserting this expression in \eqref{aasymptoitc} we have Proposition \ref{aasymptotic}. 
		\section*{\bf Acknowledgements}
		
		Authors are grateful to Xiannan Li for his encourgement and his interest in this work. Authors are also thankful to Ritabrata Munshi, Satadal Ganguly and Surya Ramana for their support.  The first and the  third author thank Alfr\'ed  R\'enyi Institute of Mathematics for providing an excellent research environment.   Authors would like to  thank the referee   for all the valuable  suggestions and comments, which  helped in improving the presentation of our article.

	\end{document}